\DeclareMathOperator{\Proj}{Proj}
\DeclareMathOperator{\Pic}{Pic}
\DeclareMathOperator{\Spec}{Spec}
\renewcommand{\P}{\mathbb P}
\newcommand{\Oo}{\mathcal O}
\newcommand{\bfa}{\mathbf a}
\newcommand{\bfb}{\mathbf b}
\newcommand{\V}{\mathbf V}
\newcommand{\Q}{\mathbb Q}
\newcommand{\R}{\mathbb R}
\newcommand{\Z}{\mathbb Z}
\newcommand{\C}{\mathbb C}
\newtheorem{theorem}{Theorem}[section]
\newtheorem{lemma}[theorem]{Lemma}
\newtheorem{corollary}[theorem]{Corollary}
\newtheorem{proposition}[theorem]{Proposition}
\theoremstyle{definition}
\newtheorem{example}[theorem]{Example}
\theoremstyle{remark}
\newcommand{\corner}[1]{%
\,
\begin{tikzpicture}[#1]%
\draw[semithick] (1ex,0) -- (0,0) -- (0,1ex);%
\end{tikzpicture}%
\,
}
\renewcommand{\square}{%
\,
\begin{tikzpicture}%
\draw[semithick] (0,0) -- (0,1ex) -- (1ex,1ex) -- (1ex,0) -- cycle;%
\end{tikzpicture}%
\,
}
\title{Verlinde Series for Hirzebruch Surfaces}
\author{Ian Cavey}
\date{\today}
\begin{document}

\maketitle

\begin{abstract}
    We give an explicit formula for Euler characteristics of line bundles on the Hilbert scheme of points on $\P^1\times\P^1$. Combined with structural results of Ellingsrud, Göttsche, and Lehn \cite{EllingsrudGöttscheLehn}, this determines the Euler characteristic of any line bundle on the Hilbert scheme of points on any smooth, projective surface. We also give an enumerative description of the dimensions of spaces of global sections of ample line bundles on Hilbert schemes of points on Hirzebruch surfaces, extending the polytope-line bundle correspondence on the underlying toric surface. 
\end{abstract}

\section{Introduction}

Hilbert schemes of points on surfaces are fundamental examples of moduli spaces in algebraic geometry with connections to a wide variety of topics in math, as well as theoretical physics (for a brief survey see \cite{G}). Verlinde series are central objects of study in the enumerative geometry of these Hilbert schemes. For a smooth, projective surface $X$, let $X^{[n]}$ denote the Hilbert scheme of $n$ points on $X$. This Hilbert scheme is a smooth, projective variety of dimension $2n$, and can be thought of as a compactification of the set of unordered $n$-tuples of distinct points in $X$. Given a line bundle $L$ on $X$, there is an induced line bundle $L_n$ on $X^{[n]}$ pulled back from the symmetric power (see Section \ref{sec:torictoaffine} for the precise definitions). Verlinde series, introduced by Ellingsrud, Göttsche, and Lehn \cite{EllingsrudGöttscheLehn}, are the generating functions for holomorphic Euler characteristics of line bundles,
\[ \V_{X,L,r}(z) = \sum_{n= 0}^\infty z^n\cdot  \chi(X^{[n]},L_n\otimes E^r) \in \Z[[z]], \]
where $E$ is $-1/2$ times the exceptional divisor on $X^{[n]}$ and $r$ is a fixed integer.\\

Verlinde series contain fundamental enumerative information about Hilbert schemes of points. All line bundles on $X^{[n]}$ are of the form $L_n\otimes E^r$ \cite{FogartyII}, so the coefficients of Verlinde series contain the holomorphic Euler characteristics of all line bundles. In particular, sufficiently ample line bundles on $X^{[n]}$ correspond to projective embeddings $X^{[n]}\hookrightarrow \P(H^0(X^{[n]},L_n\otimes E^r))$, and in this case the coefficient on the $z^n$ term of the Verlinde series $\V_{X,L,r}(z)$ is the dimension of the vector space $H^0(X^{[n]},L_n\otimes E^r).$\\

Verlinde series are also known to be related to other important enumerative invariants of Hilbert schemes. Johnson \cite{J} and Marian, Oprea, and Pandharipande \cite{MOPCombofLehn} conjectured that Verlinde series can be transformed into the generating series for the top Segre classes of higher rank tautological vector bundles on $X^{[n]}$ by an explicit change of variables. These higher rank Segre series generalize those in Lehn's well-known conjecture \cite{Lehn}, which was proved by Voison \cite{Voisin} and Marian, Oprea, and Pandharipande \cite{MOPCombofLehn}. Higher rank Segre series are known completely for $K$-trivial surfaces \cite{MOPSegre}, while formulas for arbitrary surfaces are known only for tautological vector bundles of certain ranks \cite{MOPSegre,Y}. The conjectured correspondance between Verlinde series and higher rank Segre series was recently proved by Göttsche and Mellit \cite{GM}, although neither series was determined in general. Consequently, the determination of general formulas for either the Verlinde series or higher rank Segre series also determines the other. \\

An essential feature of Verlinde series established by Ellingsrud, Göttsche, and Lehn \cite{EllingsrudGöttscheLehn} is the factorization into universal power series $A_r,B_r,C_r,D_r\in \Q[[z]]$ depending only on $r$,
\begin{equation}\label{eqn:factorization} \V_{X,L,r}(z) = A_r(z)^{\chi(L)}\cdot B_r(z)^{\chi(\Oo_X)}\cdot C_r(z)^{c_1(L)\cdot K_X-\frac12 K_X^2}\cdot D_r(z)^{K_X^2}, \end{equation}
where $K_X$ is the canonical divisor on $X$. In particular, $\chi(X^{[n]},L_n\otimes E^r)$ does not depend on the pair $(X,L)$ beyond the four enumerative invariants $\chi(L),\chi(\Oo_X),c_1(L)\cdot K_X,$ and $K_X^2$. Computing Verlinde series can therefore be reduced to finding formulas for the series $A_r,B_r,C_r$ and $D_r$ for each integer $r.$ Using Serre duality one can show that these series satisfy the symmetry relations $A_{-r}(z)=A_r(z),$ $B_{-r}(z)=B_r(z),$ $D_{-r}(z)=D_r(z),$ and $C_{-r}(z)=(C_r(z))^{-1}$ (\cite{EllingsrudGöttscheLehn} Theorem 5.3), so we restrict to the case $r\geq 0$. \\

For $r=0,1$, the coefficients of the Verlinde series are given by the combinatorially suggestive formulas,
\begin{equation}\label{r=01equation} \chi(X^{[n]},L_n) = {\chi(L)+n-1 \choose n}, \hspace{1cm}\text{and}\hspace{1cm} \chi(X^{[n]},L_n\otimes E) = {\chi(L) \choose n}, \end{equation}
valid for any smooth surface $X$ and line bundle $L$ (\cite{EllingsrudGöttscheLehn} Lemma 5.1). For $\chi(L)>0$, these formulas count the number of ways to choose $n$ objects from a set of $\chi(L)$ with and without repetitions respectively. In both cases $r=0,1$ we have $B_r=C_r=D_r=1$, and one can give formulas for $A_r$.\\

One approach to determine Verlinde series for $r>1$ is to focus on particular surfaces with additional structure. For example, the Hilbert scheme of points on a K3 surface is a symplectic manifold, and Ellingsrud, Göttsche, and Lehn use this additional structure to show that $\chi(X^{[n]},L_n\otimes E^r) = {\chi(L)-(r^2-1)(n-1) \choose n}$ for any K3 surface $X$, line bundle $L$, and integer $r$ (\cite{EllingsrudGöttscheLehn}\label{K3formula} Theorem 5.3). The enumerative data of the underlying K3 surface is $\chi(\Oo_X)=2$ and $K_X=0$, so this result is a formula for the coefficients of the Verlinde series $\V(z) = A_r(z)^{\chi(L)}\cdot B_r(z)^2$. From this, Ellingsrud, Göttsche, and Lehn extract formulas for $A_r$ and $B_r$ for any integer $r$. A key consequence of the structural formula (\ref{eqn:factorization}) is that these formulas for $A_r$ and $B_r$ deduced from the K3 case determine the Verlinde series for any surface $X$ for which $K_X=0$, since the unknown series $C_r$ and $D_r$ do not contribute to their Verlinde series.\\

Formulas for Verlinde series involving $C_r$ and $D_r$ have proved more difficult to find. Recently, using the theory of Macdonald polynomials, Göttsche and Mellit \cite{GM} gave a substantially more complicated formula for $C_r$ and a conjectural formula for $D_r$ for arbitrary $r$. In particular, their formula for $C_r$ determines the Verlinde series for any surface $X$ for which $K_X^2=0$, extending the known $K_X=0$ case.\\

Our first main result is a formula for the Euler characteristics of line bundles on the Hilbert scheme of points on $\P^1\times \P^1$. The relevant enumerative invariants of the pair $(X,L)=(\P^1\times\P^1,\Oo(d_1,d_2))$ are $\chi(L)=(d_1+1)(d_2+1)$, $\chi(\Oo_X)=1$, $c_1(L)\cdot K_X = -2(d_1+d_2)$, and $K_X^2=8$. This result is therefore an explicit formula for the coefficients of the Verlinde series
\begin{equation}\label{p1p1Verlinde} \V(z) = (A_r(z))^{(d_1+1)(d_2+1)}\cdot B_r(z) \cdot (C_r(z))^{-2(d_1+d_2)-4}\cdot(D_r(z))^{8} \end{equation}
for any integers $d_1,d_2$ and $r>0$.\\

To state the formula, we first set up some combinatorial notation. For any vector $\delta = (\delta_1,\dots,\delta_{n-1})\in \{0,1,\dots,r\}^{n-1}$, define the statistics $|\delta| = \sum_{i=1}^{n-1}\delta_i,$ $c(\delta)  = 1+ \# \{ i=1,\dots,n-1 \,|\, \delta_i\neq 0 \},$ and $\ell(\delta)  = 1+ \# \{ i=1,\dots,n-1 \,|\, \delta_i=r \}$. There are exactly $c(\delta)$ distinct numbers in the list $0,\delta_1,\delta_1+\delta_2,\dots,\delta_1+\cdots+\delta_{n-1}$ which we label in increasing order $a_1,\dots,a_c$ and we write $n_k = n_k(\delta)$ for the number of occurrences of $a_k$ in this list. Finally, for each $k=1,\dots,c$ we define $w_k(\delta)  = \sum_{i=1}^c n_i \max\{r-|a_k-a_i|,0\}.$

\begin{theorem}\label{P1P1}
    For $X=\P^1\times\P^1$, any line bundle $L=\Oo(d_1,d_2)$, and $r>0$,
    \[ \chi(X^{[n]},L_n\otimes E^r) =  \sum_{\delta\in \{0,1,\dots,r\}^{n-1}} {d_1 - |\delta|+\ell(\delta) \choose \ell(\delta)}\prod_{k=1}^{c(\delta)} {d_2- w_k(\delta)+r +n_k(\delta) \choose n_k(\delta)}. \]
\end{theorem}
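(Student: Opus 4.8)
The plan is to reduce the computation of $\chi(X^{[n]},L_n\otimes E^r)$ to a count of global sections of a sufficiently positive line bundle, and then to carry out that count by exploiting the $T$-action on $X^{[n]}$ together with the toric-to-affine reduction of Section~\ref{sec:torictoaffine}. First I would observe that, for fixed $n$ and $r$, both sides are polynomial functions of $(d_1,d_2)$. The right-hand side is manifestly polynomial, being a finite sum of products of binomial coefficients each polynomial in a single $d_i$. For the left-hand side, the factorization \eqref{eqn:factorization} with $\chi(L)=(d_1+1)(d_2+1)$, $\chi(\Oo_X)=1$, $c_1(L)\cdot K_X=-2(d_1+d_2)$, and $K_X^2=8$ exhibits $\V(z)$ as in \eqref{p1p1Verlinde}; since each of $A_r,B_r,C_r,D_r$ has constant term $1$, the coefficient of $z^n$ in $A_r^{(d_1+1)(d_2+1)}B_r\,C_r^{-2(d_1+d_2)-4}D_r^{8}$ is a polynomial in $(d_1+1)(d_2+1)$ and $(d_1+d_2)$, hence in $(d_1,d_2)$. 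It therefore suffices to prove the identity for $d_1,d_2\gg 0$. For such $d_1,d_2$ the class $L_n\otimes E^r=L_n-\tfrac{r}{2}B$ (with $B$ the Hilbert--Chow exceptional divisor) is a small perturbation of the nef and big class $L_n$ into the ample cone, so it is ample, its higher cohomology vanishes, and $\chi(X^{[n]},L_n\otimes E^r)=h^0(X^{[n]},L_n\otimes E^r)$. This recasts the theorem as the enumerative count promised in the introduction.

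Next I would set up the count equivariantly. The torus $T=(\C^*)^2$ acts on $X=\P^1\times\P^1$ with four fixed points, and the induced action on $X^{[n]}$ has isolated fixed points, namely the tuples of monomial ideals supported at the four corners with total colength $n$. The space $H^0(X^{[n]},L_n\otimes E^r)$ is then a $T$-representation, so its dimension equals the number of weights occurring. Using the affine charts $\cong\A^2$ of $X$, I would reduce the determination of this character to the Hilbert schemes of the charts, where $L_n$ and $E$ are explicit $T$-equivariant line bundles: the weight of $L_n$ at a fixed point depends only on the colengths at the corners, the weight of $E^r$ is governed by the box contents of the underlying partitions, and the tangent weights are given by the standard arm--leg formula.

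The combinatorial heart of the argument, and the main obstacle, is to reorganize the resulting weight count into the stated $\delta$-sum. The asymmetry of the formula in $d_1$ and $d_2$ reflects treating the two rulings differently: projecting to the first factor, the $n$ points acquire sorted horizontal positions whose consecutive gaps, truncated at $r$, form the vector $\delta\in\{0,1,\dots,r\}^{n-1}$. The factor $\binom{d_1-|\delta|+\ell(\delta)}{\ell(\delta)}$ then counts the placements in $\{0,\dots,d_1\}$ of the $c(\delta)$ distinct horizontal coordinates consistent with $\delta$, where an entry $\delta_i=r$ signals a ``free'' gap of size at least $r$ (the source of the $\ell(\delta)$ free parameters). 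For each horizontal cluster $k$, the factor $\binom{d_2-w_k(\delta)+r+n_k(\delta)}{n_k(\delta)}$ counts the admissible vertical positions of the $n_k$ points of that cluster, and the overlap statistic $w_k(\delta)=\sum_i n_i\max\{r-|a_k-a_i|,0\}$ should be exactly the contribution of the $E^r$-twist, which couples clusters lying within horizontal distance $r$.

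I expect the hardest step to be deriving this truncated-linear overlap weight from the equivariant weight of $E^r$ together with the tangent-space normalization, and then proving that the multi-index sum of these local counts reproduces the full character of $H^0$. The case $r=1$ gives a useful consistency check and a template for the general reorganization: there $c(\delta)=\ell(\delta)=1+|\delta|$ and $w_k(\delta)=n_k(\delta)$, so the summand becomes $\binom{d_1+1}{c}\prod_k\binom{d_2+1}{n_k}$; summing over $\delta$ amounts to summing over compositions $(n_1,\dots,n_c)$ of $n$, and extracting the coefficient of $t^n$ in $\sum_c\binom{d_1+1}{c}\bigl((1+t)^{d_2+1}-1\bigr)^c=(1+t)^{(d_1+1)(d_2+1)}$ recovers $\binom{(d_1+1)(d_2+1)}{n}=\binom{\chi(L)}{n}$, in agreement with \eqref{r=01equation}.
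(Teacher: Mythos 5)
Your outer reductions are sound, and in fact they coincide with the paper's own framing: both sides are polynomials in $(d_1,d_2)$ (the left side via the factorization \eqref{eqn:factorization}), so it suffices to prove the identity for $d_1,d_2\gg 0$; in that range $L_n\otimes E^r$ is ample (this is exactly the condition $d_1,d_2>r(n-1)$ of \cite{BertramCoskun}), and $\chi=h^0$ — though note that vanishing of higher cohomology is not automatic from ampleness but follows from the Frobenius splitting of $X^{[n]}$ \cite{KT}, which you should cite; and the computation is organized through the $T$-action and the four affine charts, which is Proposition \ref{C2reduction}. Also, the dimension of $H^0$ is the number of weights counted \emph{with multiplicity}; weight multiplicities here are typically larger than one (the paper's example in Section \ref{sec:c2} has a weight space of dimension $5$), so "the number of weights occurring" is not the right count.

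The genuine gap is in what you yourself call the combinatorial heart: you never derive the $\delta$-sum, you only describe what its terms ought to count, and you explicitly defer "the hardest step." Moreover, the route you sketch — expanding the character over monomial ideals at the four fixed corners, with $E^r$-weights from box contents and tangent weights from the arm--leg formula \eqref{C2localization} — is precisely the approach the paper sets aside as unworkable (the introduction notes that the localization expressions are "unwieldy" and that integrality is not even visible from them). The paper's actual mechanism is different and is the substance of the proof: via Haiman's identification $H^0((\C^2)^{[n]},E^r)\simeq A^r$ and the trailing-term theorem of \cite{Cavey} (Theorem \ref{C2}), each corner series $\sigma^{(i)}_{n,r}$ becomes a generating function of lattice points of an explicit polyhedron (Corollary \ref{C2convexgeometry}); the global count is then matched with the corner counts not by manipulating fixed-point weights but by slicing the lattice points by type $\delta$ (Lemma \ref{typedeltainequalities}), applying Brion's formula \cite{Brion1988} to each polytope $P^\delta_{n,r}$, and exhibiting a weight-preserving bijection between lattice points of the vertex cones and lattice points of the products $P^{(1)}_{n_1,r}\times\cdots\times P^{(4)}_{n_4,r}$ (proof of Theorem \ref{Thm:Eulerchar}); finally, for $s=0$ each $P^\delta_{n,r}$ is a product of simplices, whose lattice-point counts are exactly the binomial coefficients in the statement. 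Without a substitute for these ingredients — in your language, an actual derivation of the truncated overlap weight $w_k(\delta)$ from the equivariant data, plus a proof that the cluster-by-cluster counts assemble into the character of $H^0$ — your argument stops exactly where the theorem begins. Your $r=1$ consistency check is correct and reassuring, but it carries no evidential weight for $r\geq 2$, which is the only new content of the theorem.
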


Theorem \ref{P1P1} is proved in Section \ref{sec:Hirz}. We note that the quantity $\chi(X^{[n]},L_n\otimes E^r)$ is clearly symmetric in $d_1,d_2$, whereas the symmetry of the formula given in Theorem \ref{P1P1} is not at all apparent. The asymmetry of the expression comes from our study of a (symmetric) collection of polynomials using a non-symmetric term order (see Sections \ref{sec:c2} and \ref{sec:globalsections}). It would be interesting to show directly that the formula in Theorem \ref{P1P1} is symmetric in $d_1,d_2$.\\

Specializing Theorem \ref{P1P1} allows one to determine the series $C_r$ and $D_r$ explicitly. For example, when $(d_1,d_2)=(-1,-1)$ and $(d_1,d_2)=(-1,-2)$ the previous formula gives the coefficients of $\V(z) = B_r(z)\cdot D_r(z)^8$ and $\V(z) = B_r(z)\cdot C_r(z)^2 \cdot D_r(z)^8$ respectively. Since $B_r(z)$ is known, one can solve for $D_r(z)$ and $C_r(z)$ by dividing and taking roots of these power series. Theorem \ref{P1P1} therefore determines the Verlinde series $\V_{X,L,r}(z)$ for any surface $X$, line bundle $L$, and integer $r$. \\

The expressions for $C_r$ and $D_r$ extracted from Theorem \ref{P1P1} appear quite different from those given by Göttsche and Mellit \cite{GM}. In particular, it remains an open problem to show that their formula for $D_r$ is correct, and to show directly that the two different expressions for $C_r$ agree. Theorem \ref{P1P1} provides a new way to establish such conjectures: It suffices to show that a conjectured expression for $D_r$ gives the correct value of any one of the Verlinde series determined by Theorem \ref{P1P1}, e.g. $\V(z) = B_r(z)\cdot D_r(z)^8$.  \\

Finally, we note that the determination of Verlinde series by Theorem \ref{P1P1} also determines all higher rank Segre series by the Segre-Verlinde correspondence established by Göttsche and Mellit \cite{GM}. However, it is not clear how to extract closed form formulas for these Segre series from Theorem \ref{P1P1}. A first step in this direction might be to show that Theorem \ref{P1P1} is compatible under this correspondence with the formulas for Segre series established by Marian, Oprea, and Pandharipande \cite{MOPSegre} and Yao \cite{Y} in certain ranks.\\

We obtain Theorem \ref{P1P1} as a consequence of our second main result, a combinatorial interpretation for global sections of ample line bundles on Hilbert schemes of points on Hirzebruch surfaces. For now, we remain restricted to the special case $X=\P^1\times \P^1$. Consider the line bundle $L=\Oo(d_1,d_2)$ on $X=\P^1\times \P^1$. By the toric geometry of $X$ \cite{CLS}, there is a basis for the global sections $H^0(X,L)$ indexed by the integer points in the rectangle $P_L=[0,d_1]\times[0,d_2]\subseteq \R^2$. To generalize this to $X^{[n]}$, we introduce the following terminology. Let $(\bfa,\bfb)= (a_1,b_1),\dots,(a_n,b_n)$ be an ordered $n$-tuple of integer points in $P_L$. For any integer $r\geq 0$, we say that $(\bfa,\bfb)$ is \textit{$r$-lexicographically increasing in $P_L$} if:
\begin{enumerate}
    \item $a_i\leq a_{i+1}$ for each $i=1,\dots,n-1$,
    \item if $a_i=a_{i+1}$ then $b_{i+1}\geq b_i+r$ for each $i=1,\dots,n-1$, and
    \item $\sum_{i=1}^{j-1} \max\{ r-(a_j-a_i),0\} \leq b_j \leq d_2 - \sum_{k=j+1}^n \max\{ r-(a_k-a_j),0\}$ for each $j=1,\dots,n$.
\end{enumerate}
This final condition says that not only do we have $b_j\in [0,d_2]$ (because $(a_j,b_j)\in P_L$), but any other point in the tuple $(\bfa,\bfb)$ whose $a$-coordinate is close to $a_j$ imposes an additional constraint on $b_j$. One checks that an $n$-tuple of integer points $(\bfa,\bfb)$ in $P_L$ is $0$-lexicographically increasing in $P_L$ if and only if $(a_1,b_1)\leq \cdots \leq (a_n,b_n)$ in lexicographic order, and $1$-lexicographically increasing if and only if $(a_1,b_1)< \cdots < (a_n,b_n)$ in lexicographic order. For larger $r$, $r$-lexicographic increasingness in $P_L$ is a stronger notion of separatedness for $n$-tuples.

\begin{example}
    Let $P=[0,2]\times [0,4]$, and $n=r=3$. There are ten $3$-lexicographically increasing triples of points in $P$. These triples are depicted below where the points in each triple are in increasing lexicographic order. For example, the first diagram represents the triple $(a_1,b_1)=(0,0)$, $(a_2,b_2)=(0,3)$, and $(a_3,b_3)=(2,2)$.
    \[\begin{tikzpicture}
    
    \foreach \i in {0,...,9}{
        \draw[thick] (1.5*\i,0) -- (1.5*\i+1,0) -- (1.5*\i+1,2) -- (1.5*\i,2) -- cycle;
        \foreach \j in {0,...,2}
        \foreach \k in {0,...,4}
            \filldraw (1.5*\i+0.5*\j,0.5*\k) circle (.8pt);
    }
    \filldraw (0,0) circle (2pt);
    \filldraw (0,1.5) circle (2pt);
    \filldraw (1,1) circle (2pt);    
    
    \filldraw (1.5,0) circle (2pt);
    \filldraw (1.5,1.5) circle (2pt);
    \filldraw (2.5,1.5) circle (2pt);
    
    \filldraw (3,0) circle (2pt);
    \filldraw (3,1.5) circle (2pt);
    \filldraw (4,2) circle (2pt);
    
    \filldraw (4.5,0) circle (2pt);
    \filldraw (5,1) circle (2pt);
    \filldraw (5.5,1.5) circle (2pt);
    
    \filldraw (6,0) circle (2pt);
    \filldraw (6.5,1) circle (2pt);
    \filldraw (7,2) circle (2pt);
    
    \filldraw (7.5+3,0) circle (2pt);
    \filldraw (8.5+3,0.5) circle (2pt);
    \filldraw (8.5+3,2) circle (2pt);
    
    \filldraw (9-1.5,0.5) circle (2pt);
    \filldraw (9.5-1.5,1) circle (2pt);
    \filldraw (10-1.5,1.5) circle (2pt);
    
    \filldraw (10.5-1.5,0.5) circle (2pt);
    \filldraw (11-1.5,1) circle (2pt);
    \filldraw (11.5-1.5,2) circle (2pt);
        
    \filldraw (12,0.5) circle (2pt);
    \filldraw (13,0.5) circle (2pt);
    \filldraw (13,2) circle (2pt);
    
    \filldraw (13.5,1) circle (2pt);
    \filldraw (14.5,0.5) circle (2pt);
    \filldraw (14.5,2) circle (2pt);
    
    \end{tikzpicture}\]
\end{example}

In Section \ref{sec:Hirz} we extend the definition of $r$-lexicographically increasing $n$-tuples of points in rectangles to trapezoids corresponding to line bundles on Hirzebruch surfaces. Our second main result uses this notion to extend the toric combinatorics of Hirzebruch surfaces to their Hilbert scheme of points.

\begin{theorem}\label{introtheorem}
    Let $X$ be a Hirzebruch surface and $L_n\otimes E^r$ an ample line bundle on $X^{[n]}$ (so in particular $r>0$). The set of $r$-lexicographically increasing $n$-tuples of points in $P_L$ indexes a basis of $H^0(X^{[n]},L_n\otimes E^r).$ In this case, $\chi(X^{[n]},L_n\otimes E^r)$ is equal to the number of $r$-lexicographically increasing $n$-tuples of points in $P_L$.
\end{theorem}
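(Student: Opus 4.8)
The plan is to realize $H^0(X^{[n]},L_n\otimes E^r)$ as an explicit finite-dimensional space of polynomials and then extract a monomial basis by a standard-monomial (Gröbner) argument. The starting point is the toric structure of $X$: the Hilbert--Chow morphism $\rho\colon X^{[n]}\to \Sym^n X$ satisfies $\rho_*\Oo_{X^{[n]}}=\Oo_{\Sym^n X}$, so $H^0(X^{[n]},L_n)\cong \Sym^n H^0(X,L)$, i.e.\ the space of symmetric polynomials in $n$ copies $(x_1,y_1),\dots,(x_n,y_n)$ of the toric coordinates whose per-point bidegrees are constrained by the lattice points of $P_L$. Since $E^r$ is a non-positive multiple of the reduced boundary divisor, it embeds $L_n\otimes E^r$ as a subsheaf of $L_n$, so its sections form the subspace of $\Sym^n H^0(X,L)$ cut out by vanishing along the big diagonal of $\Sym^n X$ to the order prescribed by $E^r$. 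Pinning down this vanishing condition explicitly --- working locally on the $T$-invariant charts $\cong\A^2$ of $X$ and gluing --- is the first technical task, and it produces the (symmetric) collection of polynomials that governs the whole computation.

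Next I would fix a \emph{non-symmetric} term order, letting the $x$-coordinates dominate the $y$-coordinates and breaking ties lexicographically; this is precisely the choice that breaks the $d_1\leftrightarrow d_2$ symmetry and yields the asymmetric formula of Theorem~\ref{P1P1}. I would then compute the standard monomials of the module above and show they biject with the $r$-lexicographically increasing $n$-tuples in $P_L$. I expect the three defining conditions to emerge directly from the order: condition (1), $a_i\le a_{i+1}$, from symmetrizing and sorting by the dominant ($x$) coordinate; condition (2), the gap $b_{i+1}\ge b_i+r$ when $a_i=a_{i+1}$, from the order-$r$ diagonal vanishing forcing the $y$-exponents of colliding points to spread; and condition (3), the $w_k$ and boundary inequalities, from the interaction of the polytope degree bounds with this vanishing, as points with nearby $a$-coordinates consume part of the available $y$-range.

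The main obstacle, as in any standard-monomial argument, is to prove that the proposed generators form a Gröbner basis, equivalently that the standard monomials indexed by $r$-lexicographically increasing tuples are \emph{linearly independent} in the section space; that they \emph{span} follows routinely from the division algorithm once a generating set is in hand. For independence I would exhibit, for each such standard monomial, a genuine section having exactly that leading term, so that no unexpected $S$-polynomial reduction (no hidden syzygy among the diagonal-vanishing relations) can collapse the count; equivalently, I would prove a dimension lower bound matching the number of $r$-lex-increasing tuples. Verifying that conditions (1)--(3) record \emph{all} the relations imposed by the vanishing, and no more, is where I expect the real work to lie.

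Finally, ampleness of $L_n\otimes E^r$ enters twice: it ensures the global polynomial model computes $H^0$ faithfully, with no correction from the Hilbert--Chow pushforward in this range, and it provides the vanishing $H^i(X^{[n]},L_n\otimes E^r)=0$ for $i>0$ needed to conclude $\chi=h^0=\#\{r\text{-lex tuples}\}$. For the vanishing I would argue either via Kodaira vanishing, using $K_{X^{[n]}}=(K_X)_n$ together with the requisite positivity, or by computing the higher-cohomology weight spaces directly from the $T$-action. The passage from $\P^1\times\P^1$ (rectangles) to a general Hirzebruch surface (trapezoids) then requires only adapting $P_L$ and the $y$-range bounds of condition (3) to the sloped top edge of the trapezoid, after which the term order and the independence argument carry over; combined with polynomiality of $\chi(X^{[n]},L_n\otimes E^r)$ in $d_1,d_2$, this upgrades the ample-range identity to Theorem~\ref{P1P1} for all $d_1,d_2$.
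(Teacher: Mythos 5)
Your overall frame---identify $H^0(X^{[n]},L_n\otimes E^r)$ with a concrete space of polynomials, fix the non-symmetric lexicographic order, show the leading-term exponents are exactly the $r$-lexicographically increasing tuples in $P_L$, then use higher-cohomology vanishing to get $\chi=h^0$---matches the shape of the result, but the proposal is missing the one idea that makes the count work. You correctly isolate the crux: showing that \emph{every} $r$-lex-increasing tuple in $P_L$ is attained as a leading term, equivalently a lower bound on $\dim H^0$ matching the tuple count. But you then only declare that you would ``exhibit'' such sections or ``prove a dimension lower bound,'' with no method offered. This is precisely the step the paper does \emph{not} do directly; it is flagged at the end of Section \ref{sec:globalsections} as an open problem (``It would be interesting to give a more direct proof of the previous corollary by constructing the polynomials with each given trailing term''). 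As written, the proposal restates the theorem's difficulty rather than resolving it. A secondary inaccuracy: the section space is not ``symmetric polynomials vanishing along the big diagonal'' but the span $A^r$ of products of $r$ alternating polynomials intersected with the support condition (for odd $r$ these are alternating, not symmetric), and identifying this with $H^0$ rests on Haiman's results, not just on $\rho_*\Oo_{X^{[n]}}=\Oo_{\Sym^n X}$.

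The paper's route around the crux is indirect and is the real content of Sections \ref{sec:c2} and \ref{sec:Hirz}. The upper bound (every leading-term exponent is an $r$-lex tuple in $P_L$) is quoted from \cite{Cavey}; since the number of distinct leading terms of a finite-dimensional subspace of polynomials automatically equals its dimension, it then suffices to prove $\#\{r\text{-lex tuples}\}=\chi=h^0$. The equality $\chi=h^0$ comes from Frobenius splitting \cite{KT} (your Kodaira alternative is not automatic: $-K_X$ is not nef for $\mathcal H_s$ with $s\geq 3$, so one must separately verify that $(L-K_X)_n\otimes E^r$ is ample). The equality $\#\{r\text{-lex tuples}\}=\chi$ is Theorem \ref{Thm:Eulerchar}, and its proof involves no Gr\"obner-basis argument at all: one expands $\chi^T(X^{[n]},L_n\otimes E^r)$ by equivariant localization as a sum over $n_1+\cdots+n_4=n$ of products of the four chart series $\sigma^{(i)}_{n_i,r}$; one knows each $\sigma^{(i)}_{n_i,r}$ is the lattice-point generating function of a polyhedron $P^{(i)}_{n_i,r}$ by the $\C^2$ theorem of \cite{Cavey}; and one shows the set $P^{\square{}}_{n,r}$ of $r$-lex tuples in $P_L$ decomposes---first by type $\delta$, then via Brion's formula \cite{Brion1988} into vertex cones, with ampleness $d_1,d_2>r(n-1)$ needed to control the vertex structure of each $P^\delta_{n,r}$---into pieces in weight-preserving bijection with the integer points of the products $P^{(1)}_{n_1,r}\times\cdots\times P^{(4)}_{n_4,r}$. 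Without this localization-plus-Brion mechanism, or an actual construction of sections with prescribed leading terms, your argument does not close.
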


The statement that $\chi(X^{[n]},L_n\otimes E^r)$ coincides with the number of $r$-lexicographically increasing $n$-tuples of points in $P_L$ is proved in Theorem \ref{Thm:Eulerchar}. The Frobenius splitting of $X^{[n]}$ implies that $\chi(X^{[n]},L_n\otimes E^r)=\dim H^0(X^{[n]},L_n\otimes E^r)$ for any ample $L_n\otimes E^r$ \cite{KT}, so abstractly we know that any basis of $H^0(X^{[n]},L_n\otimes E^r)$ has as many elements as the number of such $n$-tuples. In Section \ref{sec:globalsections}, however, we give a much more direct sense in which these $n$-tuples correspond to global sections. The global sections can be naturally identified with a certain $\C$-linear span of polynomials $W\subseteq \C[x_1,y_1,\dots,x_n,y_n]$, and the $r$-lexicographically increasing $n$-tuples $(\bfa,\bfb)$ in $P_L$ are precisely the leading term exponents of polynomials $f = x_1^{a_1}y_1^{b_1}\cdots x_n^{a_n}y_n^{b_n}+\cdots$ in $W$ with respect to a certain term order.\\

The proof of Theorem \ref{introtheorem} uses the (standard) fact that the Verlinde series for toric surfaces can be expressed in terms of equivariant Verlinde series for $\C^2$, which we review in Section \ref{sec:torictoaffine}. Equivariant Verlinde series for $\C^2$ can then be computed using the equivariant localization formula, a strategy that was used to compute several coefficients for $C_r$ and $D_r$ in \cite{EllingsrudGöttscheLehn}. However, the combinatorics of this expression are unwieldy. It is not even clear from these expressions that the Euler characteristics of line bundles on $X^{[n]}$ should be integers.\\

Our present results are based on a new combinatorial interpretation of the equivariant Verlinde series for $\C^2$, given in Section \ref{sec:c2}. We interpret the coefficients as generating functions of integer points in certain convex sets, or equivalently of certain $n$-tuples of integer points in the plane. This reduces Theorem \ref{introtheorem} to an identity of generating functions of integer points in certain convex sets, and we show in Section \ref{sec:Hirz} that this identity can be deduced from Brion's formula \cite{Brion1988}.\\

\textit{Acknowledgements:} I thank Rahul Pandharipande and Anton Mellit for helpful comments on a preliminary version of this paper, and Dave Anderson for valuable discussions during the early stages of this project.

\section{Background on Verlinde Series and Toric Surfaces}\label{sec:torictoaffine}

Let $X$ be a smooth, projective, toric surface equipped with an action of $T\simeq (\C^*)^2$. For background on the theory of toric varieties we refer to \cite{CLS}. The same torus $T$ acts on the Hilbert scheme $X^{[n]}$ by pull-back of subschemes. For any line bundle $L$ on $X$, the symmetric line bundle $L\boxtimes \cdots\boxtimes L$ on $X^n$ descends to the symmetric power $X^n/S_n$. Let $L_n$ denote the pullback of this line bundle to the Hilbert scheme $X^{[n]}$ via the Hilbert-Chow morphism $X^{[n]}\to X^n/S_n$. When $L$ is a $T$-equivariant line bundle, $L_n$ inherits a $T$-equivariant structure as well.\\

Let $\Sigma_n\subseteq X^{[n]}\times X$ denote the universal family, and $\Oo_n$ its structure sheaf. The line bundle $E$ on $X^{[n]}$ is defined as $E=\det(q_*(\Oo_n\otimes p^*\Oo_X))$ where $q$ and $p$ are the natural projections of $X^{[n]}\times X$ onto $X^{[n]}$ and $X$ respectively. In terms of divisor classes, we have $c_1(E)=-\frac{1}{2}[D]$ where $D\subseteq X^{[n]}$ is the exceptional divisor of the Hilbert-Chow morphism parametrizing reduced length-$n$ subschemes of $X$. The line bundle $E$ is also equipped with a natural $T$-equivariant structure. A fundamental theorem due to Fogarty \cite{FogartyII} states that $\Pic(X^{[n]})\simeq \Pic(X)\times \Z E$, so that every line bundle on $X^{[n]}$ is of the form $L_n\otimes E^r$ for some line bundle $L$ on $X$ and $r\in \Z$.\\

Given a $T$-equivariant line bundle $L$ on $X$ and an integer $r$, the equivariant Euler characteristic of the line bundle $L_n\otimes E^r$ is defined as 
\[ \chi^T\left(X^{[n]},L_n\otimes E^r\right) = \sum_{i=0}^{2n}\sum_{(a,b)\in \Z^2} (-1)^i\, t^aq^b \, \dim_\C H^i\left(X^{[n]},L_n\otimes E^r\right)_{(a,b)}, \]
where $V_{(a,b)}$ denotes the $(a,b)$-weight space of a vector space $V$ on which $T$ acts. In other words, $V_{(a,b)}$ is the set of all $v\in V$ such that $(t,q)\cdot v = t^aq^b v$ for all $(t,q)\in T$. \\

Since $X^{[n]}$ is projective, the cohomology groups of line bundles on $X^{[n]}$ are finite-dimensional and so their equivariant Euler characteristics are Laurent polynomials in $t$ and $q$. These can be assembled into equivariant refinements of the Verlinde series introduced in the introduction,
\[ \V^T_{X,L,r}(z) = \sum_{n= 0}^\infty z^n\cdot \chi^T(X^{[n]},L_n\otimes E^r) \in \Z[t^{\pm 1},q^{\pm1}][[z]]. \]
Unlike ordinary Velinde series, these equivariant series can be defined for non-projective toric surfaces. In particular, consider the action of $T$ on $\C^2$ defined on the coordinate ring $\C[x,y]$ by $(t,q)\cdot x=t x$ and $(t,q)\cdot y=q y$. We equip $\C^2$ with a $T$-linearized line bundle $L$. Although any such line bundle is trivial, it may be equipped with a non-trivial torus action and it will be useful for us to keep track of this data. In this case the equivariant Verlinde series is defined as
\[ \V^T_{\C^2,L,r}(z) = \sum_{n= 0}^\infty z^n\cdot \chi^T((\C^2)^{[n]},L_n\otimes E^r), \]
where the coefficients are now formal power series in $t,q$ rather than Laurent polynomials. In fact, these coefficients can be represented by rational functions in $t,q$. Suppose that $L$ is equipped with the $T$-action by the character $t^{m_1}q^{m_2}$. The equivariant Euler characteristic of $L_n\otimes E^r$ can be computed by the equivariant localization formula \cite{Haiman1998tQN},
\begin{equation}\label{C2localization}
    \chi^T((\C^2)^{[n]},L_n\otimes E^r) = t^{nm_1}q^{nm_2} \sum_{\lambda \vdash n} \frac{t^{r n(\lambda)}q^{r n(\lambda')}}{\prod_{e\in \lambda}(1-t^{1+l(e)}q^{-a(e)})(1-t^{-l(e)}q^{1+a(e)})}.
\end{equation}
The above sum is over partitions $\lambda$ of $n$, $a(e)$ and $l(e)$ denote the arm and leg lengths respectively of a cell $e$ in $\lambda$, $\lambda'$ denotes the conjugate partition to $\lambda$, and $n(\mu)=\sum_{e\in \mu} l(e)$. More generally, for an action of $T$ on $\C^2$ by distinct characters $t^{u}q^{v}$ and $t^{u'}q^{v'}$ rather than $t$ and $q$, one replaces $t$ and $q$ in the sum above with the corresponding characters.\\

The following standard result expresses equivariant Verlinde series for projective toric surfaces in terms of those for $\C^2$.

\begin{proposition}\label{C2reduction}
    For any smooth, projective, toric surface $X$ equipped with a $T$-linearized line bundle $L$ and integer $r$,
    \[ \V^T_{X,L,r}(z) = \prod_{i}\V^T_{U_i,L|_{U_i},r}(z), \]
    where the product is over the open sets $\C^2\simeq U_i\subseteq X$ in the standard $T$-invariant affine open cover of $X$.
\end{proposition}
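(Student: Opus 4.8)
The plan is to prove the identity by equivariant localization, reducing both sides to a sum over the same set of torus-fixed points and exhibiting a factorization of the local contributions. Since $X$ is a smooth projective toric surface, $T$ acts on $X$ with isolated fixed points $x_1,\dots,x_m$, one for each maximal cone of the fan, and the $i$-th fixed point is the origin of the chart $U_i\cong\C^2$. Because $X^{[n]}$ is smooth and projective, the induced $T$-action has isolated fixed points as well, and the holomorphic Lefschetz (K-theoretic Atiyah--Bott) fixed point formula expresses $\chi^T(X^{[n]},L_n\otimes E^r)$ as a sum over these fixed points of the character of the fiber of $L_n\otimes E^r$ divided by the product $\prod(1-w^{-1})$ over the tangent weights $w$ at the fixed point. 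This is exactly the shape of formula (\ref{C2localization}) for $\C^2$.

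First I would classify the $T$-fixed points of $X^{[n]}$. A $T$-invariant length-$n$ subscheme $Z\subseteq X$ must be supported at the fixed points of $X$, so it decomposes as a disjoint union $Z=\bigsqcup_i Z_i$, where $Z_i$ is a $T$-invariant subscheme of length $n_i$ supported at $x_i$ and $\sum_i n_i=n$. Within the chart $U_i\cong\C^2$ each $Z_i$ is a monomial ideal, i.e. a partition $\lambda^{(i)}\vdash n_i$; thus the fixed points of $X^{[n]}$ are indexed by tuples $(\lambda^{(1)},\dots,\lambda^{(m)})$ with $\sum_i|\lambda^{(i)}|=n$, and equivalently by choices of fixed points $[Z_i]\in U_i^{[n_i]}$ whose lengths sum to $n$.

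The heart of the argument is to show that the local data attached to such a fixed point factors as a product over $i$. For the tangent space, since the supports of the $Z_i$ are disjoint I would use $\Oo_Z=\bigoplus_i\Oo_{Z_i}$ to obtain a $T$-equivariant isomorphism $T_{[Z]}X^{[n]}\cong\operatorname{Hom}(I_Z,\Oo_Z)\cong\bigoplus_i\operatorname{Hom}(I_{Z_i},\Oo_{Z_i})\cong\bigoplus_i T_{[Z_i]}U_i^{[n_i]}$; concretely this reflects the fact that near $[Z]$ the Hilbert scheme is $T$-equivariantly a product of the local Hilbert schemes $U_i^{[n_i]}$ about the $[Z_i]$. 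For the fiber, $L_n$ is pulled back from $\Sym^n X$, so its fiber at $[Z]$ (which maps to $\sum_i n_i[x_i]$) is $\bigotimes_i(L_{x_i})^{\otimes n_i}$, while $E=\det q_*\Oo_n$ has fiber $\det H^0(\Oo_Z)=\bigotimes_i\det H^0(\Oo_{Z_i})$; both factor as products of the fibers of the corresponding local bundles on $U_i^{[n_i]}$. Because each $U_i\cong\C^2$ is $T$-equivariant, with $T$ acting through the characters given by the ray generators of the $i$-th cone and $L|_{U_i}$ carrying the character of $L$ at $x_i$, the local contribution is precisely the summand of formula (\ref{C2localization}) computing $\chi^T(U_i^{[n_i]},(L|_{U_i})_{n_i}\otimes E^r)$ with $t,q$ replaced by the appropriate local characters.

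Combining these factorizations, the localization contribution of the fixed point $(\lambda^{(i)})_i$ to $\chi^T(X^{[n]},L_n\otimes E^r)$ equals the product over $i$ of the local contributions of $\lambda^{(i)}$. Summing over all fixed points with $\sum_i n_i=n$ and weighting by $z^n=\prod_i z^{n_i}$, the resulting multi-index sum factors, giving
\[ \sum_{n\geq 0}z^n\chi^T(X^{[n]},L_n\otimes E^r)=\prod_i\Big(\sum_{n_i\geq 0}z^{n_i}\chi^T(U_i^{[n_i]},(L|_{U_i})_{n_i}\otimes E^r)\Big)=\prod_i\V^T_{U_i,L|_{U_i},r}(z), \]
which is the claim. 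I expect the main obstacle to be the third step: verifying carefully that the identification of $X^{[n]}$ near a multiply-supported fixed point with a product of local Hilbert schemes is genuinely $T$-equivariant and compatible with the line bundle $E$ (whose determinantal definition must be seen to respect the decomposition $\Oo_Z=\bigoplus_i\Oo_{Z_i}$), since this is precisely what licenses replacing the global weights by the local toric characters in each factor.
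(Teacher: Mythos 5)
Your proposal is correct and follows essentially the same route as the paper: expand both sides by the equivariant localization formula, note that $T$-fixed subschemes decompose as disjoint unions $\xi=\xi_1\sqcup\cdots\sqcup\xi_k$ supported at the toric fixed points, and check that each fixed point's localization contribution factors as the product of the local contributions. The only difference is that you spell out the verifications (tangent space splitting, factorization of the fibers of $L_n$ and $E$) that the paper's proof leaves implicit.
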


\begin{proof}
    Let $U_1,\dots,U_k$ denote the standard affine open cover of $X$, with $p_1,\dots,p_k$ the corresponding torus fixed points. The generating function identity in the proposition is equivalent to the expression
    \[ \chi^T(X^{[n]},L_n\otimes E^r) = \sum_{n_1+\cdots+n_k=n} \prod_{i=1}^k \chi^T(U_i^{[n_i]},(L|_{U_i})_{n_i}\otimes E^r). \]
    To see that these coincide, expand each equivaraint Euler characteristic using the localization formula as in \cite{Haiman1998tQN}. The fixed points $\xi\in (X^{[n]})^T$ can be decomposed as $\xi=\xi_1\sqcup \cdots\sqcup \xi_k$ where $\xi_i\subseteq U_i$ is a $T$-fixed scheme supported at $p_i$. The term corresponding to $\xi$ in the localization expression for $\chi^T(X^{[n]},L_n\otimes E^r)$ is the product of the terms corresponding to $\xi_1,\dots,\xi_k$ in $\prod_{i=1}^k \chi^T(U_i^{[n_i]},(L|_{U_i})_{n_i}\otimes E^r)$ where $n_i$ denotes the length of the component $\xi_i$.
\end{proof}

In Section \ref{sec:Hirz} we study the case $X=\mathcal H_s$ a Hirzebruch surface, defined as the projectivization of the split rank two vector bundle $\Oo_{\P^1}\oplus\Oo_{\P^1}(s)$ over $\P^1$ for some fixed integer $s\geq 0$. Let $D_1$ be the class of the projectivized $T$-fixed section with self-intersection $s$, and $D_2$ be the class of a fiber. The line bundles associated to $D_1$ and $D_2$ freely generate the Picard group of $X$. We identify the Newton polygon of the line bundle $L = \Oo_{X}(d_1 D_1+d_2 D_2)$ with the polygon in $\R^2$ defined by $0\leq x \leq d_1$ and $0\leq y \leq d_2+ sd_1$ as depicted below, and denote this polygon by $P_{L}\subseteq \R^2$.\\

\vspace{-2ex}
\begin{figure}[h]
    \centering
    \[ \begin{tikzpicture}
        \def\a{1.7};
        \def\b{1.7};
        \def\r{.1};
        \def\l{.6};
        \draw (0,0) -- (\a,0) -- (\a,\a+\b) -- (0,\b) -- cycle;
        \filldraw (0,0) circle (2pt);
        \filldraw (\a,0) circle (2pt);
        \filldraw (\a,\a+\b) circle (2pt);
        \filldraw (0,\b) circle (2pt);
        \draw[->] (\r,-\r) -- node [midway, below] {$t$}  (\r+\l,-\r);
        \draw[->] (-\r,\r) -- node [midway, left] {$q$}  (-\r,\l+\r);
        \draw[->] (\a-\r,-\r) -- node [midway, below] {$t^{-1}$}  (\a-\r-\l,-\r);
        \draw[->] (\a+\r,\r) -- node [midway, right] {$q$}  (\a+\r,\r+\l);
        \draw[->] (\a-1.4*\r,\a+\b) -- node [midway, above] {$q^{-1}t^{-s}\hspace{3ex}$}  (\a-1.4*\r-\l,\a+\b-\l);
        \draw[->] (\a+\r,\a+\b-\r) -- node [midway, right] {$q^{-1}$}  (\a+\r,\a+\b-\r-\l);
        \draw[->] (0,\b+1.4*\r) -- node [midway, above] {$qt^s\hspace{1ex}$}  (\l,\b+\l+1.4*\r);
        \draw[->] (-\r,\b-\r) -- node [midway, left] {$q^{-1}$}  (-\r,\b-\r-\l);
        \node[below left] at (0,0) {$1$};
        \node[below right] at (\a,0) {$t^{d_1}$};
        \node[above right] at (\a,\a+\b) {$t^{d_1} q^{d_2+s d_1}$};
        \node[above left] at (0,\b) {$q^{d_2}$};
        \end{tikzpicture} \]
    \label{fig:polygon}
    \vspace{-1cm}
    \caption{The polygon $P_L$ corresponding to $L=\Oo_{X}(d_1 D_1+d_2 D_2)$. Each vertex corresponds to a fixed point $p_i$ and is labeled with the character of $L|_{U_{p_i}}$. The rays extending from each vertex are labelled with the characters by which $T$ acts on $U_i$.}
\end{figure}
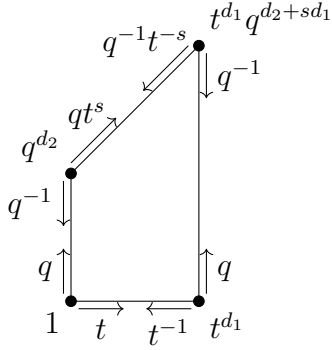

There are four standard affine opens $U_1,\dots,U_4$ with corresponding fixed points $p_1,\dots,p_4$ corresponding to the vertices $(0,0), (d_1,0), (d_1,d_2+sd_1),$ and $(0,d_2)$ of $P_L$ respectively. For readability, we sometimes use the notation $U_{\corner{}}=U_1$, $U_{\corner{rotate=90}}=U_2,$ $U_{\corner{rotate=180}}=U_3$, and $U_{\corner{rotate=270}}=U_4$.\\

We introduce notation for the rational function $\sigma_{n,r}(t,q) = \chi^T((\C^2)^{[n]},E^r)$ given by (\ref{C2localization}). For the Hirzebruch surface $X=\mathcal H_s$ and line bundle $L=\Oo_{X}(d_1 D_1+d_2 D_2)$, we will need four variants of this rational function corresponding to the open sets $U_1,\dots,U_4\subseteq X$:
\begin{align}\label{sigmais}
\begin{split}
    \sigma^{(1)}_{n,r}(t,q) = \sigma^{\corner{}}_{n,r}(t,q) & := \chi^T(U_1^{[n]},(L|_{U_1})_n\otimes E^r) = \sigma_{n,r}(t,q) \\
    \sigma^{(2)}_{n,r}(t,q) =\sigma^{\corner{rotate=90}}_{n,r}(t,q) & := \chi^T(U_2^{[n]},(L|_{U_2})_n\otimes E^r) = t^{n d_1}\sigma_{n,r}(t^{-1},q) \\
    \sigma^{(3)}_{n,r}(t,q) =\sigma^{\corner{rotate=180}}_{n,r}(t,q) & := \chi^T(U_3^{[n]},(L|_{U_3})_n\otimes E^r) = t^{n d_1}q^{n(d_2+sd_2)}\sigma_{n,r}(q^{-1}t^{-s},q^{-1}) \\
    \sigma^{(4)}_{n,r}(t,q) =\sigma^{\corner{rotate=270}}_{n,r}(t,q) & := \chi^T(U_4^{[n]},(L|_{U_4})_n\otimes E^r) = q^{n d_2}\sigma_{n,r}(qt^s,q^{-1}) .
\end{split}
\end{align}
With this notation, the equality between coefficients on $z^n$ in Proposition \ref{C2reduction} gives the identity
\begin{equation}\label{Hirzidentity}
    \chi^T(X^{[n]},L_n\otimes E^r) = \sum_{n_1+\cdots+n_4=n} \left(\prod_{i=1}^4 \sigma^{(i)}_{n_i,r}(t,q)\right).
\end{equation}

\section{Combinatorial Verlinde Series for the Affine Plane}\label{sec:c2}

In this section, we give a new formula for the rational function $\sigma_{n,r}(t,q)=\chi^T((\C^2)^{[n]},E^r)$ in the case $r>0$. First, we recall an explicit construction of $(\C^2)^{[n]}$ following Haiman \cite{Haiman1998tQN}. \\

Consider the action of the symmetric group $S_n$ on $\C[\mathbf{x,y}] = \C[x_1,y_1,\dots,x_n,y_n]$ defined by $\sigma\cdot x_i = x_{\sigma(i)}$ and $\sigma\cdot y_i = y_{\sigma(i)}$. A polynomial $f\in \C[\mathbf{x,y}]$ is said to be symmetric if $\sigma\cdot f=f$ for all $\sigma\in S_n$, and alternating if $\sigma\cdot f = \mathrm{sgn}(\sigma)f$ for all $\sigma\in S_n$.\\

Let $A^0\subseteq \C[\mathbf{x,y}]$ denote the space of all symmetric polynomials, and $A^1\subseteq \C[\mathbf{x,y}]$ the space of all alternating polynomials. Each $n$-tuple of not-necessarily distinct points $(a_1,b_1),\dots,(a_n,b_n)\in \Z^2_{\geq 0}$ corresponds to a monomial symmetric polynomial $x_1^{a_1}y_1^{b_1}\cdots x_n^{a_n}y_n^{b_n}+$ (symmetric terms). Similarly, each $n$-tuple of distinct points $(a_1,b_1),\dots,(a_n,b_n)\in \Z^2_{\geq 0}$ corresponds to an alternating polynomial $\det(x_i^{a_j}y_i^{b_j})_{ij}.$ These polynomials form bases of $A^0$ and $A^1$ as $(a_1,b_1),\dots,(a_n,b_n)$ vary over all such $n$-tuples up to reordering.\\

For $r>1$, let $A^r\subseteq \C[\mathbf{x,y}]$ denote the span of products $f_1\cdots f_r$ where $f_1,\dots,f_r\in A^1$, so that $R=A^0\oplus A^1\oplus A^2\oplus \cdots$ forms a graded ring. Haiman shows that the Hilbert scheme of $n$ points in $\C^2$ is isomorphic to $\Proj(R)$ in such a way that the natural morphism $\Proj(R)\to \Spec(A^0)$ corresponds to the Hilbert-Chow morphism (\cite{Haiman1998tQN}, Proposition 2.6).\\

In the notation of the previous section, the line bundle $E$ corresponds to $\Oo(1)$ under the identification of $(\C^2)^{[n]}$ with $\Proj(R)$. Haiman's results can be used to show that the ring $R$ is integrally closed, and therefore for each $r\geq 0$ there is an isomorphism (see \cite{Cavey}, Corollary 3.10)
\[ H^0((\C^2)^{[n]}, E^r) \simeq A^r. \]

In contrast to the spaces of symmetric and alternating polynomials, it is unclear how to obtain a basis of $A^r$ when $r>1$. Such a basis can be extracted from our earlier results \cite{Cavey}, which we now summarize.\\

Let $(\bfa,\bfb)= (a_1,b_1,\dots,a_n,b_n)\in \R^{2n}$, and regard a point $(\bfa,\bfb)\in\R^{2n}$ as an ordered $n$-tuple of points $(a_1,b_1),\dots,(a_n,b_n)\in \R^2$. Define $P_n\subseteq \R^{2n}$ to be the convex hull of the set of nonnegative integer vectors $(\bfa,\bfb)\in \Z^{2n}_{\geq 0}$ such that $(a_1,b_1)<\cdots<(a_n,b_n)$ in lexicographic order. In \cite{Cavey} we showed by induction on $n$ that $P_n$ is given explicitly by
\[ P_n  = \left\{\begin{varwidth}{10 in} \setstretch{1.5}
$(\bfa,\bfb)\in \R^{2n}$
 \end{varwidth} 
 \hspace{1ex}\left|\hspace{1ex} 
 \begin{varwidth}{10 in}
$0\leq a_1\leq a_2\leq\cdots\leq a_n$, \\
for each $j=1,\dots,n-1$, if $a_j = a_{j+1}$ then $b_{j+1}\geq b_j+1$, and\\
$b_j\geq \sum_{i=1}^{j-1}\max\{1-(a_j-a_i),0\} $ for all $1 \leq j \leq n$
 \end{varwidth} 
 \right.
 \right\}
 \setstretch{1}. \]

We equip $\C[\mathbf{x,y}]$ with the lexicographic term order where the variables are ordered as $x_1>\cdots>x_n>y_1>\cdots>y_n$. The following result allows us to extract a basis of $A^r$ for $r>1$.

\begin{theorem}\cite{Cavey} \label{C2}
    For each $r>0$, the integer points in the $r$-fold dilation $(\bfa,\bfb)\in (rP_n) \cap \Z^{2n}$ are precisely the vectors that appear as exponents of trailing terms $x_1^{a_1}\cdots x_n^{a_n}y_1^{b_1}\cdots y_n^{b_n}$ of polynomials in $A^r$.
\end{theorem}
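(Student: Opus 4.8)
The plan is to identify the set $S_r$ of exponents of trailing terms of nonzero elements of $A^r$ and to show directly that $S_r = rP_n\cap\Z^{2n}$ by proving the two inclusions separately. Throughout I would rely on two standard facts about the lexicographic order. First, passing to the trailing (smallest) term is multiplicative, $\mathrm{trail}(fg)=\mathrm{trail}(f)\,\mathrm{trail}(g)$, so the exponent of the trailing term of a product is the sum of the exponents of the factors. Second, for any finite-dimensional subspace $V\subseteq\C[\mathbf{x,y}]$ the trailing-term exponents of its nonzero elements biject with a basis of $V$; in particular the trailing term of a linear combination is the smallest trailing term of a basis element occurring with nonzero coefficient. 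These two facts turn the statement into a comparison between a monomial algebra and a lattice-point count.

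As a base case I would treat $A^1$. For a lex-sorted tuple of distinct points $(a_1,b_1)<\cdots<(a_n,b_n)$, a direct check shows that the trailing term of the alternating polynomial $\det(x_i^{a_j}y_i^{b_j})_{ij}$ is the monomial $x_1^{a_1}y_1^{b_1}\cdots x_n^{a_n}y_n^{b_n}$ coming from the identity permutation: minimizing the lex order forces the $x$-exponents into weakly increasing order and then, within each block of equal $x$-exponent, the $y$-exponents into increasing order. Since these determinants form a basis of $A^1$ with pairwise distinct trailing terms, $S_1$ is exactly the set of such sorted tuples, which is $P_n\cap\Z^{2n}$ by the stated inequality description of $P_n$.

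For the inclusion $S_r\subseteq rP_n\cap\Z^{2n}$, fix a nonzero $g\in A^r$ with trailing monomial $m_0$. Every element of $A^r$ is $\sgn^r$-(anti)invariant under $S_n$, so the entire $S_n$-orbit of $m_0$ occurs in $g$ with coefficients equal up to sign; as $m_0$ is the smallest monomial of $g$, it must be the lex-minimum of its orbit, which forces its exponent to be sorted (weakly increasing $x$-exponents, and weakly increasing $y$-exponents within each block of equal $x$-exponent). These are the ``shape'' facets of $rP_n$, but the genuinely harder facets are the separation inequalities $b_j\ge\sum_{i<j}\max\{r-(a_j-a_i),0\}$ together with the gap condition $b_{j+1}\ge b_j+r$ when $a_j=a_{j+1}$. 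These do not follow from sortedness alone: for $n=r=2$ the monomial $y_1y_2$ is sorted and occurs in $(y_2-y_1)^2\in A^2$, yet violates the $j=2$ separation inequality — it is simply never the smallest term (here $y_2^2$ is smaller). I would therefore establish the separation inequalities by a minimality argument: assuming a sorted $m_0$ violates one of them at an index $j$, I would exhibit a strictly lex-smaller monomial that must also occur in $g$, its survival guaranteed by the orbit/sign structure of the alternating factors, contradicting the minimality of $m_0$. Pinning down this exchange — locating, inside a product of $r$ alternating polynomials, the lower monomial forced to appear whenever the $y$-exponents are ``too crowded'' — is the main obstacle of the proof, and is exactly where the antisymmetry of the factors (rather than mere symmetry) is essential.

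For the reverse inclusion $rP_n\cap\Z^{2n}\subseteq S_r$, I would show that $P_n$ has the integer decomposition property: every lattice point $\mathbf{w}\in rP_n$ can be written as a sum $\mathbf{w}=\mathbf{v}_1+\cdots+\mathbf{v}_r$ with each $\mathbf{v}_t\in P_n\cap\Z^{2n}$. Given such a decomposition, the product of the corresponding alternating determinants lies in $A^r$ and, by multiplicativity of trailing terms, has trailing exponent $\mathbf{v}_1+\cdots+\mathbf{v}_r=\mathbf{w}$, so $\mathbf{w}\in S_r$. I expect to prove the decomposition property by a greedy induction on $r$ that peels off a single lattice point of $P_n$ and checks that the remainder still satisfies the scaled separation inequalities; concavity in $r$ of the piecewise-linear lower bounds $\sum_{i<j}\max\{r-(a_j-a_i),0\}$ is what should make the peeling valid, and this is the secondary combinatorial difficulty. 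Combining the two inclusions yields $S_r=rP_n\cap\Z^{2n}$, which is the assertion of the theorem.
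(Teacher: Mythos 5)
Your proposal never reaches a proof: it reduces Theorem \ref{C2} to two sub-claims and then explicitly defers both, and those two sub-claims are exactly the mathematical content of the theorem. (A preliminary remark: this paper does not prove Theorem \ref{C2} at all --- it is imported from \cite{Cavey} --- so the comparison below is with what the present paper reports about that reference.) The routine parts of your plan are correct: trailing terms are multiplicative for the lex order; the number of distinct trailing monomials of a finite-dimensional subspace equals its dimension; the trailing monomial of $\det(x_i^{a_j}y_i^{b_j})_{ij}$ for a lex-sorted tuple of distinct points is the diagonal monomial, so $S_1=P_n\cap\Z^{2n}$; and the $S_n$-orbit argument shows the trailing exponent of any $g\in A^r$ is lex-minimal in its orbit. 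But orbit-minimality only yields $a_1\le\cdots\le a_n$ with \emph{weakly} increasing $b$'s inside blocks of equal $a$'s; it yields neither the gap condition $b_{j+1}\ge b_j+r$ nor the separation facets $b_j\ge\sum_{i<j}\max\{r-(a_j-a_i),0\}$, which are precisely the facets that distinguish $rP_n$ from the naive sorted cone. Your own example $(y_2-y_1)^2$ shows you understand this, and you then defer the needed ``exchange'' lemma as ``the main obstacle.'' That lemma is not an implementation detail; it is the forward inclusion of the theorem, and nothing in your write-up indicates how to produce, inside a general linear combination of products of alternating polynomials (not a single product --- cancellation in the span can create new, larger trailing monomials), the smaller monomial whose presence you need.

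The reverse inclusion has the same character. Granting the integer decomposition property of $P_n$ --- every lattice point of $rP_n$ is a coordinate-wise sum of $r$ lattice points of $P_n$ --- your argument via products of determinants and multiplicativity of trailing terms is fine. But the IDP is itself a substantive result of \cite{Cavey}: it is exactly the assertion the paper makes immediately after displaying the inequality description of $rP_n$ (``these are precisely the $n$-tuples that can be written as coordinate-wise sums of $r$ $n$-tuples of distinct points\dots''), and you only gesture at a greedy peeling. Two concrete warnings about that gesture: the bounds $\max\{r-x,0\}$ are \emph{convex} in $r$, not concave, and what the peeling actually requires is the subadditive inequality $\max\{r-x-y,0\}\le\max\{1-x,0\}+\max\{(r-1)-y,0\}$ for $x,y\ge 0$, together with a careful choice of which point to peel so that the peeled $n$-tuple consists of \emph{distinct} points and the remainder still satisfies all the scaled inequalities; none of this is carried out. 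Note also that the paper states the description of $P_n$ was proved in \cite{Cavey} by induction on $n$, suggesting the cited proof is organized around an induction rather than your two direct inclusions. As it stands, your proposal is a correct strategic outline with the two hard steps missing, so it does not constitute a proof.
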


The dilation $rP_n$ can be described explicitly by 

\[ rP_n  = \left\{\begin{varwidth}{10 in} \setstretch{1.5}
$(\bfa,\bfb)\in \R^{2n}$
 \end{varwidth} 
 \hspace{1ex}\left|\hspace{1ex} 
 \begin{varwidth}{10 in}
$0\leq a_1\leq a_2\leq\cdots\leq a_n$, \\
for each $j=1,\dots,n-1$, if $a_j = a_{j+1}$ then $b_{j+1}\geq b_j+r$, and\\
$b_j\geq \sum_{i=1}^{j-1}\max\{r-(a_j-a_i),0\} $ for all $1 \leq j \leq n$
 \end{varwidth} 
 \right.
 \right\}
 \setstretch{1}. \]

We call any integer vector $(\bfa,\bfb)\in rP_n$ an $r$\textit{-lexicographically increasing $n$-tuple in $\R^2_{\geq0}$}, as these are precisely the $n$-tuples that can be written as coordinate-wise sums of $r$ $n$-tuples of distinct points in $\Z^2_{\geq0}$ written in increasing lexicographical order.

\begin{corollary}\label{C2convexgeometry}
    For any $r>0$, the rational function $\sigma_{n,r}(t,q) = \chi^T((\C^2)^{[n]},E^r) $ is given by
    \[ \sigma_{n,r}(t,q)= \sum_{(\mathbf{a,b})\in (rP_n)\cap \Z^{2n}} t^{a_1+\cdots+a_n}q^{b_1+\cdots+b_n} \]
\end{corollary}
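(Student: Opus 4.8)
The plan is to deduce the identity from Theorem \ref{C2} together with the isomorphism $H^0((\C^2)^{[n]},E^r)\simeq A^r$ recorded above, which I will use $T$-equivariantly. The argument splits into two steps: reducing the equivariant Euler characteristic to the equivariant character of global sections, and then computing that character from the trailing-term description of $A^r$.

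First I would argue that for $r>0$ the higher cohomology $H^i((\C^2)^{[n]},E^r)$ vanishes for $i>0$, so that $\sigma_{n,r}(t,q)=\chi^T((\C^2)^{[n]},E^r)$ equals the $T$-equivariant character of $H^0((\C^2)^{[n]},E^r)\simeq A^r$. Under the identification $(\C^2)^{[n]}=\Proj(R)$ with $E=\Oo(1)$, the Hilbert--Chow morphism $\pi\colon(\C^2)^{[n]}\to\Sym^n\C^2=\Spec(A^0)$ realizes $E$ as a relatively ample line bundle over the affine base $\Sym^n\C^2$. Since $\pi$ is a crepant resolution of a variety with symplectic (hence rational) singularities and $E^r$ is $\pi$-nef and $\pi$-big for $r>0$, relative Kawamata--Viehweg (Grauert--Riemenschneider) vanishing gives $R^i\pi_*E^r=0$ for $i>0$; as the base is affine, $H^i((\C^2)^{[n]},E^r)=H^0(\Sym^n\C^2,R^i\pi_*E^r)=0$ for $i>0$. (Alternatively, this vanishing follows from the Frobenius splitting of the Hilbert scheme invoked later in the paper.) Writing $(A^r)_{(a,b)}$ for the weight space on which $T$ acts by $t^aq^b$, this reduces the corollary to the character identity
\[ \sigma_{n,r}(t,q)=\sum_{(a,b)\in\Z^2_{\geq 0}}\dim_\C(A^r)_{(a,b)}\,t^aq^b. \]

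Second I would compute this character from trailing terms. The diagonal $T$-action commutes with the $S_n$-action, so $A^r$ is $\Z^2$-graded by $T$-weight, and the monomial $x_1^{a_1}\cdots x_n^{a_n}y_1^{b_1}\cdots y_n^{b_n}$ has weight $t^{a_1+\cdots+a_n}q^{b_1+\cdots+b_n}$. Each weight space $(A^r)_{(a,b)}$ is finite-dimensional, and the trailing terms (in the fixed lexicographic order) of a basis are distinct monomials of weight $(a,b)$ whose number equals $\dim_\C(A^r)_{(a,b)}$. Moreover, since $A^r$ is graded, any $f\in A^r$ whose trailing term has weight $(a,b)$ has its weight-$(a,b)$ homogeneous component in $A^r$ with the same trailing term; thus the trailing terms of $A^r$ of a given weight are exactly those of the corresponding weight space. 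Summing over all weights identifies the character above with $\sum_m \wt(m)$ ranging over all trailing-term monomials $m$ of $A^r$, where $\wt$ denotes the $T$-weight. By Theorem \ref{C2} the exponent vectors of these trailing terms are precisely the integer points of $rP_n$, and the weight of the monomial with exponent $(\mathbf{a},\mathbf{b})$ is $t^{a_1+\cdots+a_n}q^{b_1+\cdots+b_n}$, giving the claimed formula.

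I expect the main obstacle to be the cohomology vanishing in the first step: once $\sigma_{n,r}$ is known to be the character of $A^r$ alone, the second step is routine term-order bookkeeping built directly on Theorem \ref{C2}. The care needed is in justifying $H^{>0}=0$ for all $r>0$ (not merely $r\gg0$), for which I would pin down the precise vanishing theorem --- relative ampleness over a rational-singularity base, or Frobenius splitting --- and check its hypotheses rather than appeal to generic Serre vanishing.
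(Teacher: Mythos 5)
Your proposal is correct and follows essentially the same route as the paper: higher cohomology of $E^r$ vanishes for $r>0$, so $\sigma_{n,r}(t,q)$ is the $T$-character of $H^0((\C^2)^{[n]},E^r)\simeq A^r$, which is then computed weight-by-weight from the trailing-term description in Theorem \ref{C2}. The only cosmetic difference is that the paper justifies the vanishing by citing the Frobenius splitting of $(\C^2)^{[n]}$ \cite{KT} --- your stated fallback --- rather than relative Kawamata--Viehweg vanishing, and it makes explicit the reduction argument showing that the number of distinct trailing terms attained in a weight space $A^r_{(a,b)}$ equals $\dim A^r_{(a,b)}$.
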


\begin{proof}
    By the Frobenius splitting of $(\C^2)^{[n]}$ \cite{KT}, the higher cohomology groups of $E^r$ vanish for all $r>0$, and so the coefficient on $t^aq^b$ in the series $\chi^T((\C^2)^{[n]},E^r)$ is equal to the dimension of $H^0((\C^2)^{[n]},E^r)_{(a,b)}$. Under the identification $H^0((\C^2)^{[n]},E^r)\simeq A^r$ described above, the $T$-action on $H^0((\C^2)^{[n]},E^r)$ corresponds to the action on $A^r$ given by $(t,q)\cdot x_i = tx_i$ and $(t,q)\cdot y_i=qy_i$ for all $i=1,\dots,n$ and $(t,q)\in T$. The weight space $A^r_{(a,b)}$ therefore consists of those polynomials $f\in A^r$ such that $(a_1+\cdots+a_n,b_1+\cdots+b_n)=(a,b)$ for every term $x_1^{a_1}\cdots x_n^{a_n}y_1^{mb_1}\cdots y_n^{b_n}$ of $f$.\\
    
    By Theorem \ref{C2}, the integer points $(\mathbf{a,b})\in rP_n\cap \Z^2$ are the lexicographic trailing term exponents of polynomials in $A^r$. The integer points $(\mathbf{a,b})\in rP_n\cap \Z^{2n}$ corresponding to trailing terms of polynomials in the weight space $A^r_{(a,b)}$ are those with $(a_1+\cdots+a_n,b_1+\cdots+b_n)=(a,b)$. Any collection of polynomials in $A^r_{(a,b)}$ with pairwise distinct trailing terms is linearly independent, and any additional polynomial not in their linear span can be reduced modulo the collection to obtain a new trailing term. This implies that the number of $(\bfa,\bfb)\in rP_n\cap \Z^{2n}$ with $(a_1+\cdots+a_n,b_1+\cdots+b_n)=(a,b)$, corresponding to all the trailing terms of polynomials in $A^r_{(a,b)}$, is equal to the dimension of $A^r_{(a,b)}$, which completes the proof.
\end{proof}

\begin{example}
    Let $n=2$ and $r=2$. By formula (\ref{C2localization}), we have
    \[ \chi^T((\C^2)^{[2]},E^2) = \frac{t^2+tq+q^2-t^2q^2}{(t^2-1)(t-1)(q^2-1)(q-1)}. \]
    One can compute, for example, that the coefficient on the $t^3q^2$ term of the power series expansion at $t=q=0$ of this rational function is $5$. Correspondingly, there are 5 integer pairs $((a_1,b_1),(a_2,b_2))$ corresponding to points $(a_1,b_1,a_2,b_2)\in 2P_2$ with $a_1+a_2=3$ and $b_1+b_2=2$. They are the pairs $((0,0),(3,2))$, $((0,1),(3,1))$, $((0,2),(3,0))$, $((1,0),(2,2))$, and $((1,1),(2,1))$.
\end{example}

\section{Combinatorial Verlinde Series for Hirzebruch Surfaces}\label{sec:Hirz}

In this section, we use the combinatorial interpretation of $\sigma_{n,r}(t,q)$ given in Corollary \ref{C2convexgeometry} to study line bundles on the Hilbert scheme of points on a Hirzebruch surface. Fix the Hirzebruch surface $X=\mathcal H_s$ and line bundle $L=\Oo(d_1D_1+d_2D_2)$ with corresponding polytope $P_L$ as defined in Section \ref{sec:torictoaffine}, and fix an integer $r>0$. \\

Our basic collection of $n$-tuples will be the integer points in the set
\[
P_{n,r}^\circ = \left\{\begin{varwidth}{10 in} \setstretch{1.5}
$(\bfa,\bfb)\in \R^{2n}$
 \end{varwidth} 
 \hspace{1ex}\left|\hspace{1ex} 
 \begin{varwidth}{10 in}
$a_1\leq a_2\leq\cdots\leq a_n$, and \\
for each $j=1,\dots,n-1$, if $a_j = a_{j+1}$ then $b_{j+1}\geq b_j+r$
 \end{varwidth} 
 \right.
 \right\}
 \setstretch{1}. \]
This is the $r$-fold dilation of the convex hull of the set of all $n$-tuples of integer points $(\bfa,\bfb)$ such that $(a_1,b_1)<\cdots<(a_n,b_n)$ in lexicographic order. Next, we introduce four constraints on vectors $(\bfa,\bfb)\in P_{n,r}^\circ$ depending on $r$ and corresponding to the left, right, bottom, and top edges of $P_L$ respectively (as drawn in Figure \ref{fig:polygon}):
\begin{align*}
    \text{(Left)} \hspace{1cm}& 0  \leq a_1, \\
    \text{(Right)} \hspace{1cm}& a_n \leq d_1, \\
    \text{(Bottom)} \hspace{1cm}& b_j  \geq \sum_{i=1}^{j-1}\max\{r-(a_j-a_i),0\} \text{ for all } 1 \leq j \leq n, \text{ and}, \\
    \text{(Top)} \hspace{1cm}& b_j  \leq d_2+s a_j-\sum_{k=j+1}^{n}\max\{r-(a_k-a_j),0\} \text{ for all } 1 \leq j \leq n.
\end{align*}

We will need the following sets of $n$-tuples corresponding to the four vertex cones of $P_L$ and the polygon $P_L$ itself:
\begin{align}\label{ConeSetsDef}
\begin{split}
P^{(1)}_{n,r} &= P^{\corner{}}_{n,r} := \left\{ (\bfa,\bfb)\in P_{n,r}^\circ \, | \, (\bfa,\bfb) \text{ satisfies the bottom and left constraints} \right\}\\
P^{(2)}_{n,r} &= P^{\corner{rotate=90}}_{n,r} := \left\{ (\bfa,\bfb)\in P_{n,r}^\circ \, | \, (\bfa,\bfb) \text{ satisfies the bottom and right constraints} \right\}\\
P^{(3)}_{n,r} &= P^{\corner{rotate=180}}_{n,r} := \left\{ (\bfa,\bfb)\in P_{n,r}^\circ \, | \, (\bfa,\bfb) \text{ satisfies the top and right constraints} \right\}\\
P^{(4)}_{n,r} &= P^{\corner{rotate=270}}_{n,r} := \left\{ (\bfa,\bfb)\in P_{n,r}^\circ \, | \, (\bfa,\bfb) \text{ satisfies the top and left constraints} \right\}\\
P^{\square{}}_{n,r} & := \left\{ (\bfa,\bfb)\in P_{n,r}^\circ \, | \, (\bfa,\bfb) \text{ satisfies the top, bottom, left, and right constraints} \right\}
\end{split}
\end{align}

By the inequalities of $P_L$ given in Section \ref{sec:torictoaffine}, any $n$-tuple $(\bfa,\bfb)\in P^{\square{}}_{n,r}$ has $(a_1,b_1),\dots,(a_n,b_n)\in P_L$ since in particular $0\leq a_j\leq d_1$ and $0\leq b_j\leq d_2+sa_j$ for all $j=1,\dots,n$. The integer points $(\bfa,\bfb)\in P^{\square{}}_{n,r}\cap \Z^{2n}$ are the \textit{$r$-lexicographically increasing $n$-tuples in $P_L$}, as described in the introduction. As a consequence of Corollary \ref{C2convexgeometry}, the integer points $P^{(i)}_{n,r}$ give a combinatorial interpretation of the series $\sigma_{n,r}^{(i)}(t,q)$.

\begin{corollary}\label{combsigmas}
    For any $n,r>0$ and $i=1,\dots,4$ we have 
    \[ \chi^T(U_i^{[n]},(L|_{U_i})_n\otimes E^r) = \sigma^{(i)}_{n,r}(t,q) = \sum_{(\mathbf{a,b})\in P^{(i)}_{n,r}\cap \Z^{2n}} t^{a_1+\cdots+a_n}q^{b_1+\cdots+b_n}. \]
\end{corollary}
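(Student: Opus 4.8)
The plan is to establish the identity one chart at a time, in each case reducing to Corollary~\ref{C2convexgeometry}. The case $i=1$ is immediate: reading off the definitions, $P^{(1)}_{n,r}$ is cut out of $P_{n,r}^\circ$ by exactly the left and bottom constraints, which are the inequalities $0\le a_1$ and $b_j\ge\sum_{i<j}\max\{r-(a_j-a_i),0\}$ defining $rP_n$; since $\sigma^{(1)}_{n,r}=\sigma_{n,r}$, the claim is Corollary~\ref{C2convexgeometry} verbatim.

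For $i=2,3,4$ the formulas (\ref{sigmais}) present $\sigma^{(i)}_{n,r}$ as $\sigma_{n,r}$ after the weight substitution induced by the lattice symmetry of $\Z^2$ carrying the vertex cone at $p_1$ onto that at $p_i$ (the reflection $a_j\mapsto d_1-a_j$ for $i=2$; reflections combined with the shear $t\mapsto qt^s$ from the slope-$s$ top edge for $i=3,4$), together with the twist by the character of $L|_{U_i}$. Applying Corollary~\ref{C2convexgeometry} then writes $\sigma^{(i)}_{n,r}$ as a sum over $rP_n\cap\Z^{2n}$ with transformed exponents, and the problem becomes to recognize this as $\sum_{(\bfa,\bfb)\in P^{(i)}_{n,r}\cap\Z^{2n}} t^{\sum a_j}q^{\sum b_j}$. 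The point I would stress at the outset is that this does \emph{not} come from a weight-preserving bijection of $n$-tuples: reflecting a point of $rP_n$ and re-sorting generally falls outside $P^{(i)}_{n,r}$, and the two sides agree only after collecting configurations of equal total weight.

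To expose the identity I would group the integer points of each cone by their $a$-shape — the multiset of distinct values $v_1<\cdots<v_c$ among $a_1,\dots,a_n$ with multiplicities $m_1,\dots,m_c$ — and sum out the $b$-coordinates for a fixed shape. For the bottom constraint the floor of each $b_j$ splits as an inter-column part $F_k=\sum_{k'<k}m_{k'}\max\{r-(v_k-v_{k'}),0\}$, constant along each column, plus the intra-column staircase forced by the tie condition; the geometric series in the $b$'s then factors into a product of per-column contributions depending only on the sizes $m_k$, times $q^{\sum_k m_k F_k}$, and the exponent $\sum_k m_k F_k=\sum_{k<k'}m_k m_{k'}\max\{r-(v_{k'}-v_k),0\}$ is symmetric in the columns. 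Hence the $b$-generating function attached to a shape depends only on the column multiplicities and this symmetric pairwise interaction, both of which are preserved by the shape reflection $v_k\mapsto d_1-v_{c+1-k}$ (with multiplicities reversed); since that reflection also sends $\sum_k m_k v_k\mapsto nd_1-\sum_k m_k v_k$, it matches the $t$-exponent demanded by (\ref{sigmais}), and summing over shapes gives the result for $i=2$. The cases $i=3,4$ go the same way with the bottom floors replaced by the top ceilings $d_2+sa_j-\sum_{k>j}\max\{r-(a_k-a_j),0\}$, so that $q\mapsto q^{-1}$ converts floors into ceilings and the shear by $s$ absorbs the $sa_j$ term.

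The main obstacle is exactly the breakdown of a naive bijection noted above: because reflection re-pairs the $a$- and $b$-coordinates, one must sum out the $b$-coordinates before comparing, so the technical core is the two facts that for a fixed $a$-shape the $b$-sum factors as stated and that its floor/ceiling exponent is a symmetric function of the columns — together these let the shape symmetry underlying each substitution in (\ref{sigmais}) carry one generating function onto the other.
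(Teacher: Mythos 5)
Your proof is correct, and it does genuinely more than the paper, which gives this corollary no proof at all: it is stated as an immediate consequence of Corollary \ref{C2convexgeometry}, the implicit argument being that each chart $U_i\simeq \C^2$ carries the characters recorded in (\ref{sigmais}), so Corollary \ref{C2convexgeometry} applied in the chart's own coordinates expresses $\sigma^{(i)}_{n,r}$ as a sum over $rP_n\cap\Z^{2n}$ with substituted weights. The step the paper leaves silent --- passing from that substituted sum to the sum over $P^{(i)}_{n,r}\cap\Z^{2n}$ --- is exactly what you isolate, and you are right that it is not induced by a pointwise affine map plus re-sorting: for $n=r=d_1=2$, the tuple $((0,0),(1,1))\in 2P_2$ reflects to $((1,1),(2,0))$, which violates the bottom constraint of $P^{(2)}_{2,2}$; its weight-partner inside $P^{(2)}_{2,2}$ is the different tuple $((1,0),(2,1))$. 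Your repair --- fix the $a$-shape, sum out the $b$'s, and observe that the resulting $q$-series depends only on the column sizes and on the symmetric interaction $\sum_{k<k'}m_km_{k'}\max\{r-(v_{k'}-v_k),0\}$, all of which the shape reflection preserves --- is precisely the mechanism needed, and it can equivalently be packaged as an explicit weight-preserving bijection: reflect the shape and translate the $b$'s of original column $k$ by $\tilde F_k-F_k$, where $\tilde F_k$ is the interaction of column $k$ with the columns to its right, which preserves total weight because $\sum_k m_k\tilde F_k=\sum_k m_kF_k$. Notably, this redistribution-of-interactions idea is the same one the paper does use later, in the proof of Theorem \ref{Thm:Eulerchar}, where pairs of $b$-coordinates are shifted up and down by $\max\{r-(a_{i'}-a_i),0\}$; so your argument both supplies the step this corollary actually requires and anticipates the key trick of the main theorem.
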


Our main object of study is the generating function of $r$-lexicographically increasing $n$-tuples in $P_L$,
\[ \sigma^{\square{}}_{n,r}(t,q):= \sum_{(\mathbf{a,b})\in P^{\square{}}_{n,r}\cap \Z^{2n}} t^{a_1+\cdots+a_n}q^{b_1+\cdots+b_n}. \]
The goal of this section is to establish the following relationship between this generating function and the Euler characteristic of $L_n\otimes E^r$.

\begin{theorem}\label{Thm:Eulerchar}
    Let $X=\mathcal H_s$, $L=\Oo(d_1D_1+d_2D_2)$ and $r>0$ as above. If $d_1,d_2>r(n-1)$, then $\sigma^{\square{}}_{n,r}(t,q) = \chi^T(X^{[n]},L_n\otimes E^r). $
\end{theorem}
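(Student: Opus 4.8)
The plan is to strip away the geometry and reduce the theorem to a single identity of lattice-point generating functions, which I would then prove with Brion's formula. By the localization identity (\ref{Hirzidentity}) the right-hand side $\chi^T(X^{[n]},L_n\otimes E^r)$ is already the product-sum $\sum_{n_1+\cdots+n_4=n}\prod_{i=1}^4\sigma^{(i)}_{n_i,r}(t,q)$, and Corollary \ref{combsigmas} identifies each $\sigma^{(i)}_{n_i,r}$ with the lattice-point generating function of the vertex cone $P^{(i)}_{n_i,r}$. Since $\sigma^{\square{}}_{n,r}$ is by definition the generating function of $P^{\square{}}_{n,r}$, the theorem becomes the combinatorial statement
\[
\sum_{(\mathbf{a},\mathbf{b})\in P^{\square{}}_{n,r}\cap\Z^{2n}} t^{a_1+\cdots+a_n}q^{b_1+\cdots+b_n}
=\sum_{n_1+\cdots+n_4=n}\ \prod_{i=1}^{4}\ \sigma^{(i)}_{n_i,r}(t,q),
\]
an equality of rational functions in $t,q$ that no longer mentions $X^{[n]}$. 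This has precisely the shape of a Brion decomposition: a bounded polytope expressed through the four cones attached to the corners of $P_L$.

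First I would apply Brion's formula \cite{Brion1988} to the lattice polytope $P^{\square{}}_{n,r}$, writing its generating function as the sum over vertices $v$ of the generating functions of the supporting cones $C_v$. The hypothesis $d_1,d_2>r(n-1)$ is exactly what makes the vertex structure tractable: a column of at most $n$ points separated by $r$ in the $b$-direction spans at most $(n-1)r<d_2$, so any point forced toward the bottom edge is separated (in $b$) from any point forced toward the top edge, and likewise the full width $d_1>r(n-1)$ separates left-pinned from right-pinned points. Thus at every vertex the separation constraints linking a bottom point to a top point, and the ordering constraints linking a left point to a right point across the width $d_1$, are slack, and a vertex is obtained by distributing the $n$ points among the four corners of $P_L$ and stacking the points at each corner into the apex of the corresponding $P^{(i)}$. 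I would use this decoupling to enumerate the vertices and compute the supporting cone $C_v$ at each.

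The hard part is that this Brion sum does \emph{not} match the product-sum term by term, so the matching must be carried out globally; this is where I expect the main work to lie. The difficulty has two sources. First, the two left-hand corners of $P_L$ (bottom-left and top-left) share the coordinate $a=0$, so although they are far apart in $b$, the global order $a_1\le\cdots\le a_n$ still couples the two groups: the supporting cone at a vertex using both left corners carries an extra inequality $a_j\le a_{j+1}$ between the groups, and its generating function is therefore not the bare product $\sigma^{(1)}_{n_1,r}\sigma^{(4)}_{n_4,r}$ (and symmetrically for the right edge). Second, the piecewise-linear bottom and top constraints, through the terms $\max\{r-(a_k-a_j),0\}$, split into several linear facets and so create additional ``bend'' vertices with no clean corner-distribution meaning. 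I expect the resolution to be a two-stage (nested) application of Brion: first decompose in the $a$-direction into a left group and a right group, which genuinely decouple since they are separated by the full width $d_1$, and then decompose each group in the $b$-direction into a bottom stack and a top stack, which decouple because of the height $d_2$. Showing that these nested decompositions, together with the contributions of the bend vertices, telescope exactly into the four-fold product $\prod_i\sigma^{(i)}_{n_i,r}$ is the heart of the argument, and is the step I would devote the most care to.
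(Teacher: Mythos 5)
Your reduction to a generating-function identity and your choice of Brion's formula as the engine both match the paper's strategy, but there is a genuine gap, and it begins one step earlier than where you locate the main difficulty: $P^{\square{}}_{n,r}$ is not a lattice polytope, so "apply Brion's formula to $P^{\square{}}_{n,r}$" is not a well-defined first move. The set $P^{\square{}}_{n,r}$ inherits from $P_{n,r}^\circ$ the conditional constraint "if $a_j=a_{j+1}$ then $b_{j+1}\geq b_j+r$," which does not cut out a closed set. Concretely, take $n=2$, $r=1$, $s=0$, $d_1=d_2=10$: the points $(a_1,b_1,a_2,b_2)=(0,5,\epsilon,5.5)$ lie in $P^{\square{}}_{2,1}$ for all $\epsilon\in(0,1]$, but the limit $(0,5,0,5.5)$ does not, since it requires $b_2\geq b_1+1$. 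Passing to the closure erases precisely this conditional constraint and introduces new integer points (e.g.\ $(0,5,0,5)$), so Brion applied to the closure computes the wrong generating function. The idea your proposal is missing — and the paper's key structural step — is to stratify the integer points of $P^{\square{}}_{n,r}$ by the type $\delta\in\{0,1,\dots,r\}^{n-1}$ recording the horizontal gaps $a_{i+1}-a_i$ capped at $r$. On each stratum the conditional constraints and all the piecewise-linear terms $\max\{r-(a_k-a_j),0\}$ become honest linear conditions (Lemma \ref{typedeltainequalities}), each $P^\delta_{n,r}$ is a closed lattice polytope, and the hypothesis $d_1,d_2>r(n-1)$ makes it combinatorially a product of simplices whose vertices are listed explicitly (Lemma \ref{verts}): push each block left or right, push each column's points down or up. Brion is then applied to each $P^\delta_{n,r}$ separately, never to $P^{\square{}}_{n,r}$ itself; this also disposes of your worry about "bend" vertices, which simply do not occur on the strata.

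Your second difficulty — that the Brion sum does not match the product-sum $\sum_{n_1+\cdots+n_4=n}\prod_i\sigma^{(i)}_{n_i,r}$ term by term — is real, but your proposed resolution (nested decompositions in the $a$- and then $b$-directions that "telescope") is left entirely unproved, and this is exactly the heart of the proof. What the paper actually does is construct an explicit weight-preserving bijection: for each $\delta$, each vertex $v$ of $P^\delta_{n,r}$, and each integer point of the tangent cone $\mathcal K_v P^\delta_{n,r}$, the $n$ points are sorted into four groups according to whether the corresponding vertex point sits in a left or right block and at the bottom or top of its column; then, for every pair $i<i'$ with $p_i$ a top point and $p_{i'}$ a bottom point on the same side, one shifts $b_i$ up and $b_{i'}$ down by the common amount $\max\{r-(a_{i'}-a_i),0\}$. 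These canceling shifts remove exactly the cross-interaction terms between the top and bottom groups — the coupling you correctly identified along the left and right edges — so that the transformed tuples satisfy the defining inequalities of $P^{(1)}_{n_1,r}\times\cdots\times P^{(4)}_{n_4,r}$, while the coordinate sums, hence the $(t,q)$-weights, are unchanged. Without this shifting construction (or a worked-out substitute for it), your outline stops at the point where the actual argument has to begin.
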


The conditions $r>0$, and $d_1,d_2>r(n-1)$ exactly describe the set of ample line bundles on $X^{[n]}$ \cite{BertramCoskun}.\\

The idea of the proof is as follows: By (\ref{Hirzidentity}) we have an expression for $\chi^T(X^{[n]},L_n\otimes E^r)$ in terms of the rational functions $\sigma^{(i)}_{n,r}(t,q)$, and by Corollary \ref{combsigmas} these rational functions are the generating functions of integer points in the convex sets $P^{(i)}_{n,r}.$ We will show that the generating function $\sigma^{\square{}}_{n,r}(t,q)$ of integer points in $P^{\square{}}_{n,r}$ satisfies the same identity (\ref{Hirzidentity}) in terms of the $\sigma^{(i)}_{n,r}(t,q)$'s by expanding the sum using Brion's formula \cite{Brion1988}. To carry this plan out, we first need a relatively detailed study of the combinatorics of these $n$-tuples.\\

Given an integer point $(\bfa,\bfb) \in P^{\circ}_{n,r}\cap \Z^{2n}$ which we think of as an $n$-tuple of points $p_j=(a_j,b_j)$, let $\delta_i = \delta_i(\bfa,\bfb) = \max\{a_{i+1}-a_i,r\} $ for each $i=1,\dots,n-1$. We refer to the vector $\delta = (\delta_1,\dots,\delta_{n-1})\in \{0,1,\dots,r\}^{n-1}$ as the type of the $n$-tuple $(\bfa,\bfb).$ For a fixed element $\delta=(\delta_1,\dots,\delta_{n-1})\in \{0,1,\dots,r\}^{n-1}$, we define two partitions of each $n$-tuple $\{p_1,\dots,p_n\}$ of type $\delta$. A \textbf{block} is a maximal collection of successive points $\{p_i,p_{i+1},\dots,p_j\}$ such that $\delta_i,\dots,\delta_{j-1}<r$. A \textbf{column} is a maximal collection of successive points $\{p_i,p_{i+1},\dots,p_j\}$ such that $\delta_i,\dots,\delta_{j-1}=0.$ Clearly, each block is made up of a union of consecutive columns. We define $I = \{ i_1,\dots,i_\ell \} \subseteq \{1,\dots,n\}$ to be the indices of the first points in each block for any $n$-tuple of type $\delta$. In terms of $\delta$, this means $i_1=1$ and $i_{k+1}$ is the index at which the $k$th entry equal to $r$ appears in $\delta$. Finally, for each column $C = \{p_j,\dots,p_{j'}\}$ we define the statistics \[ L_C = \sum_{i=1}^{j-1}\max\{r-(\delta_i+\cdots+\delta_{j-1}),0 \}, \hspace{.5cm} \text{and}\hspace{.5cm} R_{C} = \sum_{k=j'+1}^n\max\{r-(\delta_{j'}+\cdots+\delta_{k-1}),0\}. \]

\begin{lemma}\label{typedeltainequalities}
    The set of integer points $(\bfa,\bfb)\in P^{\square{}}_{n,r}\cap \Z^{2n}$ of type $\delta$ are exactly the integer points satisfying the conditions
    \begin{enumerate}
        \item $0\leq a_{i_1}, \hspace{.5cm} a_{i_k}+\delta_{i_k}+\cdots+\delta_{i_{k+1}-1}\leq a_{i_{k+1}} \text{ for each } k=1,\dots,\ell-1, \text{ and}\hspace{.5cm} a_{i_\ell}+\delta_{i_\ell}+\cdots+\delta_{n-1}\leq d_1,$
        \item for each $j\notin I$ we have $a_j=a_i+\delta_i+\cdots+\delta_{j-1}$ where $i\in I$ is the largest index less than $j$, and
        \item $L_C\leq b_j,\hspace{.5cm} b_k+r\leq b_{k+1}, \text{ for each } k=j,\dots,j'-1,\text{ and}\hspace{.5cm} b_{j'}\leq d_2+sa_j-R_{C}$ for each column $C = \{p_j,\dots,p_{j'}\}$.
    \end{enumerate}
\end{lemma}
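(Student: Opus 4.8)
The plan is to prove the characterization by treating the $a$-coordinates and the $b$-coordinates separately, exploiting the fact that once the type $\delta$ is fixed the defining inequalities of $P^{\square{}}_{n,r}$ decouple into block-level conditions on the $a_j$ and column-level conditions on the $b_j$. The observation I would use repeatedly is that for indices $i<j$ lying in a common block one has $a_j-a_i=\delta_i+\cdots+\delta_{j-1}$ exactly, since every intervening gap is $<r$ so $\delta_\ell$ equals the true gap $a_{\ell+1}-a_\ell$; whereas if a block boundary separates $i$ and $j$ then both $a_j-a_i\geq r$ and $\delta_i+\cdots+\delta_{j-1}\geq r$.

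First I would handle the $a$-coordinates. Having type $\delta$ means exactly that $a_{i+1}-a_i=\delta_i$ when $\delta_i<r$ and $a_{i+1}-a_i\geq r$ when $\delta_i=r$. Telescoping the equalities inside a block gives condition (2) directly; substituting those values into the boundary inequalities $a_{i_{k+1}}\geq a_{i_{k+1}-1}+r$ yields the interior inequalities of condition (1), while the Left and Right constraints give its first and last inequalities after writing $a_{i_1}=a_1$ and $a_n=a_{i_\ell}+\delta_{i_\ell}+\cdots+\delta_{n-1}$. I would then note that the monotonicity $a_1\leq\cdots\leq a_n$ and the recovery of the type $\delta$ are automatic from (1) and (2), since all $\delta_i\geq 0$ and the interior gaps are $<r$ by definition of a block, so that (1) and (2) are equivalent to ``type $\delta$, monotone, Left, Right''.

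Next I would fix a column $C=\{p_j,\dots,p_{j'}\}$, on which the $a_k$ are constant, and argue that the Bottom and Top bounds telescope along it. The $P^{\circ}_{n,r}$ condition contributes precisely the step inequalities $b_{k+1}\geq b_k+r$ inside $C$ and nothing between columns. Writing $B_k=\sum_{i<k}\max\{r-(a_k-a_i),0\}$ for the Bottom bound at $p_k$, I expect to show $B_k=B_j+(k-j)r$ on $C$ (each index $i$ with $j\leq i<k$ contributes a full $r$) and $B_j=L_C$ once the cross-block terms cancel; hence, modulo the step inequalities, the single inequality $b_j\geq L_C$ is equivalent to all Bottom constraints on $C$. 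The Top bound $T_k=\sum_{l>k}\max\{r-(a_l-a_k),0\}$ telescopes symmetrically to $T_k=R_C+(j'-k)r$, so $b_{j'}\leq d_2+sa_j-R_C$ is equivalent to all Top constraints on $C$. Assembling these statements over all columns and blocks then yields condition (3) and completes the equivalence.

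The main obstacle I anticipate is the bookkeeping in these telescoping identities, specifically verifying that indices lying in other blocks drop out of both the honest sums $B_j,T_{j'}$ and the $\delta$-expressions $L_C,R_C$. This is exactly where the same-block identity $a_j-a_i=\delta_i+\cdots+\delta_{j-1}$ and the fact that a separating boundary forces each relevant $\max\{r-\cdot,0\}$ to vanish (since both $a_j-a_i\geq r$ and the corresponding partial sum of $\delta$'s is $\geq r$) are essential; once these are in place the remaining comparison of the two constraint systems is a routine term-by-term check.
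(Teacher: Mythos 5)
Your proposal is correct and takes essentially the same approach as the paper's proof: conditions (1)--(2) are identified with the type-$\delta$, monotonicity, Left, and Right constraints on the $a$-coordinates, and the Bottom/Top constraints are shown to be redundant except at the lowest and highest point of each column, where they reduce to $L_C\leq b_j$ and $b_{j'}\leq d_2+sa_j-R_C$. The paper asserts these reductions without detail, so your telescoping identities and the cross-block vanishing of the $\max\{r-\cdot,0\}$ terms merely supply the bookkeeping the paper leaves implicit.
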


\begin{proof}
    The first two conditions describing the $a$-coordinates precisely say that $(\bfa,\bfb)$ has type $\delta$ and $0\leq a_1\leq \cdots \leq a_n\leq d_1.$ The constraint that for each column $C=\{p_j,\dots,p_{j'}\}$ we have $b_k+r\leq b_{k+1}$ for each $ k=j,\dots,j'-1$ are the remaining defining conditions for $(\bfa,\bfb)\in P^{\circ}_{n,r}.$ Finally, the ``top" and ``bottom" conditions defining $P^{\square{}}_{n,r}$ are redundant except at the top and bottom points of each column, which in this case reduce to the remaining inequalities $L_C\leq b_j$ and $b_{j'}\leq d_2+sa_j-R_{C}$ respectively.
\end{proof}

Let $P^\delta_{n,r}\subseteq \R^{2n}$ be the polytope defined by the equations and inequalities in Lemma \ref{typedeltainequalities}. We study the combinatorics of $P^\delta_{n,r}$ in the case $d_1,d_2>r(n-1)$. By the second condition in Lemma \ref{typedeltainequalities}, we can write all the $a$-coordinates of points $(\bfa,\bfb)\in P^\delta_{n,r}$ in terms of those with indices in $I$, $a_{i_1},\dots,a_{i_\ell}.$ The first condition says that these points are contained in $[0,d_1]$ and between each pair of points there is a minimum increase determined by $\delta$. When $d_1$ is greater than the sum of all these minimum gaps, $\delta_1+\cdots+\delta_{n-1}$, the set of all such collections of $a$-coordinates is nonempty and forms a simplex of dimension $I$. This is always the case when $d_1>r(n-1)$ since each entry of $\delta$ is at most $r$. \\

For any collection of $a$-coordinates satisfying conditions $1$ and $2$ in the lemma and any column $C=\{p_j,\dots,p_{j'}\}$, the coordinates $b_j,\dots,b_{j'}$ are contained in $[L_C,d_2+sa_j-R_{C}]$ and between each pair of points there is an increase of at least $r$. When the length of this interval, $d_2+sa_j-R_{C}-L_C$, is greater than the sum of all these minimum gaps, $r(j'-j)$, the set of all such coordinates $b_j,\dots,b_{j'}$ is nonempty and forms a simplex of dimension $j'-j+1$, the number of points in the column. This is always the case when $d_2>r(n-1)$ since it follows from the definitions of $L_C$ and $R_C$ that $L_C\leq r(j-1)$ and $R_C\leq r(n-j')$, and so
\[ d_2+sa_j-R_{C}-L_C- r(j'-j) \geq d_2- r(n-1)+sa_j  \geq 0. \]
The size of the interval containing $b_j,\dots,b_{j'}$ varies depending on $a_j=\cdots=a_{j'}$ or equivalently on $a_i$ where $i$ is the index of the first point in the block containing this column. This analysis shows that when $d_1,d_2>r(n-1)$, the polytope $P^\delta_{n,r}$ is combinatorially equivalent to a product of simplices. In particular, we can describe its vertices and their tangent cones.

\begin{lemma}\label{verts}
    If $d_1,d_2>r(n-1)$, $P^\delta_{n,r}$ is a lattice polytope with $(|I|+1)\prod_C(|C|+1)$ vertices, where the product is over all columns $C$ for an $n$-tuple of type $\delta$. For each vertex, exactly one of the inequalities in condition $1$ of Lemma \ref{typedeltainequalities} is strict, and for each column $C$ exactly one of the inequalities in condition $3$ is strict.
\end{lemma}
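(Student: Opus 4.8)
The plan is to realize $P^\delta_{n,r}$ as a ``bundle of simplices'' over a base simplex and to read off its vertices from this structure. By condition (2) of Lemma \ref{typedeltainequalities} every $a$-coordinate is an affine function of the free coordinates $a_{i_1},\dots,a_{i_\ell}$, so I would regard $P^\delta_{n,r}$ as living in the $(\ell+n)$-dimensional space with coordinates $a_{i_1},\dots,a_{i_\ell}$ and $b_1,\dots,b_n$, where $\ell=|I|$. First I would observe that condition (1), written in the $a_{i_k}$'s alone, is a chain of the form $0\le a_{i_1}$, $a_{i_{k+1}}-a_{i_k}\ge(\text{integer gap})$, $a_{i_\ell}\le d_1-(\text{integer gap})$; since the total of the minimal gaps is $\delta_1+\cdots+\delta_{n-1}\le r(n-1)<d_1$, this cuts out a nondegenerate $\ell$-dimensional lattice simplex $\Delta_a$ with the expected $\ell+1=|I|+1$ vertices, each obtained by making exactly one of the $\ell+1$ inequalities strict. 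The forgetful projection $\pi\colon P^\delta_{n,r}\to\Delta_a$ has, over each $x\in\Delta_a$, fiber equal to the product over columns $C$ of the $b$-simplex $\Delta_C(x)$ defined by condition (3), namely $L_C\le b_j$, $b_{k+1}-b_k\ge r$, and $b_{j'}\le d_2+sa_j-R_C$. The estimate in the paragraph preceding the lemma shows the interval length exceeds the sum of the minimal gaps for every $x\in\Delta_a$ (using $d_2>r(n-1)$ and $a_j\ge 0$), so each $\Delta_C(x)$ is a genuine nondegenerate simplex of dimension $|C|$ with $|C|+1$ vertices.

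The key point, and the step I expect to be the main obstacle, is that the fibers $\Delta_C(x)$ are not constant: their top anchor $d_2+sa_j-R_C$ slides affinely with $x$ while the bottom anchor $L_C$ is fixed, so $P^\delta_{n,r}$ is genuinely a sheared, non-product family, and one cannot simply invoke ``product of simplices'' to count vertices. To control this I would record the elementary fact that each vertex of $\Delta_C(x)$ depends \emph{affinely} on $x$: choosing which one of the $|C|+1$ inequalities is slack and forcing the rest to equality determines each $b_k$ either as a constant (when anchored to the bottom via $b_j=L_C$ and all gaps equal to $r$) or as $d_2+sa_j-R_C$ minus an integer multiple of $r$ (when anchored to the top), both affine in $x$.

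With affineness in hand the vertex characterization follows from two midpoint arguments. If $v=(x,b)$ is a vertex of $P^\delta_{n,r}$, then $b$ must be a vertex of the fiber $\prod_C\Delta_C(x)$, for otherwise $b=\tfrac12(b'+b'')$ for distinct fiber points $b',b''$ and $v$ is the midpoint of $(x,b'),(x,b'')\in P^\delta_{n,r}$. Conversely, if $x$ were not a vertex of $\Delta_a$, I would write $x=\tfrac12(x'+x'')$ with $x',x''\in\Delta_a$ distinct and take the \emph{same} slack-choice over $x'$ and $x''$; nondegeneracy of the fibers gives fiber vertices $b',b''$ with $(x',b'),(x'',b'')\in P^\delta_{n,r}$, and affineness gives $b=\tfrac12(b'+b'')$, so again $v$ is not a vertex. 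Hence the vertices of $P^\delta_{n,r}$ correspond bijectively to a choice of vertex of $\Delta_a$ together with, for each column $C$, a vertex of $\Delta_C$. This immediately yields exactly one strict inequality in condition (1) and exactly one strict inequality for each column in condition (3), and the count $(|I|+1)\prod_C(|C|+1)$.

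Finally, to see that $P^\delta_{n,r}$ is a lattice polytope I would check integrality of each such vertex directly. The tight $a$-constraints form a triangular system with $\pm1$ coefficients and integer right-hand sides (the integers $0$, the partial sums of the $\delta_i$, and $d_1$), so $x$, and hence every $a_j$ by condition (2), is an integer; then $d_2+sa_j-R_C$ is an integer, and the tight column constraints form a unimodular triangular chain with step $r$, forcing every $b_k$ to be an integer as well. The only genuinely nontrivial ingredient is the affine-dependence-plus-midpoint argument of the previous paragraph, which is exactly what upgrades the fiberwise product-of-simplices structure to a global statement about the vertices of the sheared polytope.
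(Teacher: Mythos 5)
Your proposal is correct and takes essentially the same route as the paper: the paper also views the $a$-coordinates $a_{i_1},\dots,a_{i_\ell}$ as ranging over a nondegenerate simplex and, for each choice of $a$-coordinates, views each column's $b$-coordinates as a nondegenerate simplex in $[L_C,\,d_2+sa_j-R_C]$, concluding that $P^\delta_{n,r}$ is combinatorially equivalent to a product of simplices whose vertices are indexed exactly as you describe (a prefix of blocks pushed left and the rest right, a prefix of each column pushed down and the rest up). Your affineness-plus-midpoint argument rigorously justifies the step the paper states only as ``this analysis shows,'' handling the shearing of the fibers when $s>0$, but it is a fleshing-out of the same decomposition rather than a different method.
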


We can index the vertices of $P^\delta_{n,r}$ as follows: Choose a number of blocks $k = 0,1,\dots,|I|$ and move the points in the first $k$ blocks as far left as possible, and the remaining points as far right as possible. For left points this means $a_j=\delta_1+\cdots+\delta_{j-1}$ and for right points it means $a_j=d_1-\delta_{j}-\cdots-\delta_{n-1}.$ Then, for each column $C=\{p_j,\dots,p_{j'}\}$ choose a number of points $k_C=0,1 \dots,|C|$ and move the first $k_C$ points in the column as far down as possible and the remaining points in the column as far up as possible. For bottom points this means $b_i = L_C+r(i-j)$ and for top points this means $b_i=d_2+sa_j-R_C-r(j'-i).$\\

Using the above terminology, we partition each vertex of $P^\delta_{n,r}$ considered as an $n$-tuple $(\bfa,\bfb)$ into four separate $n$-tuples of points: $(\bfa^{(1)},\bfb^{(1)})$ the points in a left block and bottom of their column, $(\bfa^{(2)},\bfb^{(2)})$ the points in a right block and bottom of their column, $(\bfa^{(3)},\bfb^{(3)})$ the points in a right block and top of their column, and $(\bfa^{(4)},\bfb^{(4)})$ the points in a left block and top of their column. \\

The equations defining a vertex cone of a polytope are obtained by removing all the inequalities that are strict at a given vertex. Fixing a vertex $v$ of $P^\delta_{n,r}$, we partition any integer point $(\bfa,\bfb)$ in the corresponding vertex cone $\mathcal K_v P^\delta_{n,r}$ into four $n$-tuples the same way as the vertex $n$-tuple. In other words, $(\bfa^{(i)},\bfb^{(i)})$ consists of the points in the $n$-tuple $(\bfa,\bfb)$ with index in $J^{(i)}.$\\

\begin{proof}[Proof of Theorem \ref{Thm:Eulerchar}]
    By the localization formula (\ref{Hirzidentity}), we have
    \[ \chi^T(X^{[n]},L_n\otimes E^r) = \sum_{n_1+\cdots+n_4=n} \left(\prod_{i=1}^4 \sigma^{(i)}_{n_i,r}(t,q)\right), \]
    and by Corollary \ref{combsigmas} the rational functions $\sigma^{(i)}_{n_i,r}(t,q)$ are generating functions summing over the integer points in the polyhedron $P^{(i)}_{n_i,r}$. In other words, $\chi^T(X^{[n]},L_n\otimes E^r)$ is equal to the sum of the generating functions of integer points in the products $P^{(1)}_{n_1,r}\times \cdots \times P^{(4)}_{n_4,r}$ for all $n_1+\cdots+n_4=n$.\\

    On the other hand, $\sigma^{\square{}}_{n,r}(t,q)$ is defined as a generating function summing over the integer points in $P^{\square{}}_{n,r}.$ These integer points are divided among the polytopes $P^\delta_{n,r}$, so we have
    \[ \sigma^{\square{}}_{n,r}(t,q) = \sum_{\delta\in \{0,1,\dots,r\}^{n-1}} \sigma^\delta_{n,r}(t,q), \]
    where 
    \[ \sigma^\delta_{n,r}(t,q) = \sum_{(\bfa,\bfb)\in P^\delta_{n,r}\cap \Z^{2n}} t^{a_1+\cdots+a_n}q^{b_1+\cdots+b_n}. \]
    By Brion's formula \cite{Brion1988} the generating function of integer points in $P^\delta_{n,r}$ is equal to the sum of those in its vertex cones. For a vertex $v\in P^\delta_{n,r}$, the vertex cone $K_v P^\delta_{n,r}$ is the polyhedron defined by all the conditions in Lemma \ref{typedeltainequalities} that are active at the vertex $v$. Brion's formula gives the identity
    \[ \sigma^\delta_{n,r}(t,q) = \sum_{\substack{v\in P^\delta_{n,r}\\\text{a vertex}}} \sum_{(\bfa,\bfb)\in K_v P^\delta_{n,r}\cap \Z^{2n}} t^{a_1+\cdots+a_n}q^{b_1+\cdots+b_n}, \]
    and so $\sigma^{\square{}}_{n,r}(t,q)$ is the sum of these over all $\delta$. To conclude the proof, we show that choices of $\delta\in \{0,1,\dots,r\}^{n-1}$, vertex $v\in P^\delta_{n,r}$, and integer point of $\mathcal K_v P^\delta_{n,r}$ are in bijection with choices of $n_1+\cdots+n_4=n$ and integer point of $P^{(1)}_{n_1,r}\times \cdots \times P^{(4)}_{n_4,r}$ in such a way that preserves the weights of the terms in the respective generating functions $\sigma^{\square{}}_{n,r}(t,q)$ and $\chi^T(X^{[n]},L_n\otimes E^r)$.\\

    Fix $\delta$ and a vertex $v$ of $P^\delta_{n,r}$, and let $(\bfa,\bfb)$ be an integer point in $\mathcal K_v P^\delta_{n,r}.$ We use the description of the vertices of $P^\delta_{n,r}$ given in Lemma \ref{verts}. Subdivide the $n$-tuple $(\bfa,\bfb)$ into left and right points depending on whether the corresponding point in the vertex $n$-tuple is in a left or right block. Further subdivide each column of $(\bfa,\bfb)$ depending on whether the corresponding point in the vertex $n$-tuple is a top or bottom point. Label in increasing lexicographic order four separate tuples of points $(\bfa^{(1)},\bfb^{(1)}),\dots,(\bfa^{(4)},\bfb^{(4)})$ which are the bottom-left, bottom-right, top-right, and top-left points in $(\bfa,\bfb)$ respectively. Let $n_1,\dots,n_4$ be the number of points in each of these tuples.\\

    Consider a column of points $\{p_j,\dots,p_{j'}\}$ in some $n$-tuple $(\bfa,\bfb)\in \mathcal K_v P^\delta_{n,r},$ and suppose that the column lies in a left block. Let $j''$ be the largest index corresponding to a left point. The equations defining $P^\delta_{n,r}$ imply that heights of the bottom points in the column are at least $\sum_{i=1}^{j-1}\max\{r-(a_j-a_i),0\}$, and similarly the heights of the top points are at most $\sum_{k=j'+1}^{j''}\max\{r-(a_k-a_j),0\}.$ Comparing these conditions to those defining $P^{(1)}_{n_i,r}$ and $P^{(4)}_{n_4,r}$, the only difference is that the sums for the lower and upper bounds are restricted to the lower and upper points respectively. For each $i<i'$ where $(a_i,b_i)$ is a top-left point and $(a_{i'},b_{i'})$ is a bottom-left point, we therefore shift $b_i$ up by $\max\{r-(a_{i'}-a_i),0\}$ and shift $b_{i'}$ down by $\max\{r-(a_{i'}-a_i),0\}$. Similarly, for each $i<i'$ where $(a_i,b_i)$ is a top-right point and $(a_{i'},b_{i'})$ is a bottom-right point, we shift $b_i$ up by $\max\{r-(a_{i'}-a_i),0\}$ and shift $b_{i'}$ down by $\max\{r-(a_{i'}-a_i),0\}$. Call the new collection of points after all the translations $(\bfa^{(i)},\tilde \bfb^{(i)})_{i=1,\dots,4}.$\\

    As discussed in the previous paragraph, the conditions defining the vertical heights of points in the transformed tuples $(\bfa^{(i)},\tilde \bfb^{(i)})_{i=1,\dots,4}$ for any fixed $a$-coordinates are exactly the same as those defining $P^{(1)}_{n_1,r}\times \cdots \times P^{(4)}_{n_4,r}$. As $\delta\in \{0,1,\dots,r\}^{n-1}$ and $v$ vary, these transformed collections of points are in bijection with the integer points each products $P^{(1)}_{n_1,r}\times \cdots \times P^{(4)}_{n_4,r}$. Furthermore, the transformation is defined by shifting certain pairs of points up and down by the same amount so the sum of the $a$ and $b$-coordinates of all the points is preserved. This shows that $\chi^T(X^{[n]},L_n\otimes E^r)$, a sum of generating functions of the products $P^{(1)}_{n_1,r}\times \cdots \times P^{(4)}_{n_4,r}$, coincides with $\sigma^{\square{}}_{n,r}(t,q)$, a sum of generating functions of cones $\mathcal K_v P^\delta_{n,r}$, completing the proof.
\end{proof}

Next we extract an explicit formula for $\chi(X^{[n]},L_n\otimes E^r)$ in the case $X=\P^1\times\P^1$. We have $\ell = \ell(\delta)$ denoting the number of blocks in an $n$-tuple of type $\delta$, as well as $L_C(\delta)$ and $R_C(\delta)$ for each column $C=\{p_j,\dots,p_{j'}\}$. We also define the statistic $|\delta|=\delta_1+\cdots+\delta_{n-1}$.

\begin{corollary}
    For $X=\P^1\times\P^1$, any line bundle $L=\Oo(d_1,d_2)$, and $r>0$, 
    \[ \chi(X^{[n]},L_n\otimes E^r) = \sum_{\delta\in \{0,1,\dots,r\}^{n-1}} { d_1-|\delta|+\ell(\delta) \choose \ell(\delta)} \prod_{C} {d_2-R_C-L_C-r|C|+r+|C|\choose |C|}, \]
    where the product is over all columns of points $C$ in an $n$-tuple of type $\delta$.
\end{corollary}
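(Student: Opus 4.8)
The plan is to specialize Theorem \ref{Thm:Eulerchar} to the case $X = \P^1 \times \P^1$, which is the Hirzebruch surface $\mathcal{H}_0$ (i.e. $s = 0$). First I would note that since $X^{[n]}$ with an ample $L_n \otimes E^r$ has vanishing higher cohomology (by the Frobenius splitting cited earlier), the equivariant Euler characteristic $\chi^T(X^{[n]}, L_n \otimes E^r)$ specializes at $t = q = 1$ to the ordinary Euler characteristic $\chi(X^{[n]}, L_n \otimes E^r)$. So the strategy is simply to count the integer points of $P^{\square}_{n,r}$, partitioned by type $\delta$ as in the proof of Theorem \ref{Thm:Eulerchar}, and evaluate the resulting sum.

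For a fixed type $\delta \in \{0,1,\dots,r\}^{n-1}$, I would count $|P^\delta_{n,r} \cap \Z^{2n}|$ directly using the product-of-simplices structure established in the discussion preceding Lemma \ref{verts}. By condition (2) of Lemma \ref{typedeltainequalities}, the $a$-coordinates are determined by the $\ell = \ell(\delta)$ values $a_{i_1}, \dots, a_{i_\ell}$ indexed by the first points of each block; with $s = 0$ these are free of the $b$-coordinates. Condition (1) says $0 \leq a_{i_1}$, consecutive block-starts differ by at least the prescribed gap, and $a_{i_\ell} + (\text{final gaps}) \leq d_1$. After subtracting the mandatory gaps totalling $|\delta| = \delta_1 + \cdots + \delta_{n-1}$, counting the lattice points of this simplex is the standard stars-and-bars count $\binom{d_1 - |\delta| + \ell}{\ell}$, which is the first binomial factor.

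Next, for the $b$-coordinates, I would use that (with $s = 0$) condition (3) of Lemma \ref{typedeltainequalities} decouples column by column: within each column $C = \{p_j, \dots, p_{j'}\}$ of size $|C|$, the heights satisfy $L_C \leq b_j$, successive heights increase by at least $r$, and $b_{j'} \leq d_2 - R_C$. Since $s = 0$ kills the $sa_j$ term, this interval and its constraints are independent of the $a$-coordinates, so the lattice-point count factors as a product over columns. After subtracting the mandatory within-column gaps totalling $r(|C| - 1) = r|C| - r$ and the offsets $L_C, R_C$, the number of admissible $b$-vectors for the column is the stars-and-bars count $\binom{d_2 - R_C - L_C - r|C| + r + |C|}{|C|}$, giving the product factor in the statement. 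Since $a$ and $b$ coordinates are independent when $s = 0$, the total count for type $\delta$ is the product of these, and summing over all $\delta$ yields the claimed formula.

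\emph{The main obstacle} I anticipate is purely bookkeeping: verifying the exact shift by $+\ell$ (rather than $+\ell - 1$ or $+\ell + 1$) in the first binomial and the precise combination $-r|C| + r + |C|$ in the numerator of the second, i.e. correctly translating each simplex's defining inequalities into the standard form $\binom{N + k}{k}$ that counts nonnegative integer solutions of $x_1 + \cdots + x_{k+1} = N$. The hypothesis $d_1, d_2 > r(n-1)$ guarantees each simplex is nonempty (as shown before Lemma \ref{verts}), so no vanishing or sign issues arise, and the binomial coefficients count honest lattice simplices. The only genuinely conceptual point is observing that the $s = 0$ specialization is exactly what decouples the $a$- and $b$-coordinates and the columns from one another, permitting the clean product form; for $s > 0$ the $sa_j$ term would reintroduce coupling and the count would not factor so simply.
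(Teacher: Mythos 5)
Your argument correctly reproduces the paper's computation in the range $d_1,d_2 > r(n-1)$: specializing to $s=0$, observing that $P^\delta_{n,r}$ decouples into a simplex for the block positions and one simplex per column, and counting lattice points by stars-and-bars exactly as the paper does. The two binomial factors you obtain are the right ones, and your remark that $s=0$ is what makes the count factor is precisely the point.

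However, there is a genuine gap: the corollary asserts the formula for \emph{any} line bundle $L=\Oo(d_1,d_2)$, i.e.\ for arbitrary integers $d_1,d_2$, while your proof only applies when $d_1,d_2 > r(n-1)$, since that hypothesis is needed both to invoke Theorem \ref{Thm:Eulerchar} (which requires $L_n\otimes E^r$ ample) and to guarantee the simplices are nonempty lattice polytopes counted by the binomial coefficients. The paper closes this gap with a final polynomiality argument: for fixed $n,r$, the left-hand side $\chi(X^{[n]},L_n\otimes E^r)$ is a polynomial in $(d_1,d_2)$ (e.g.\ by Hirzebruch--Riemann--Roch on $X^{[n]}$, or by the structure theorem (\ref{eqn:factorization})), and each binomial coefficient $\binom{d_1-|\delta|+\ell}{\ell}$, $\binom{d_2-R_C-L_C-r|C|+r+|C|}{|C|}$ is a polynomial in $d_1$ or $d_2$, so the right-hand side is also a polynomial in $(d_1,d_2)$; two polynomials agreeing for all sufficiently large integer pairs must agree identically. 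This extension is not a formality one can omit: the paper's main application of the corollary (determining $C_r$ and $D_r$) uses precisely the non-ample values $(d_1,d_2)=(-1,-1)$ and $(-1,-2)$, where your lattice-point count has no direct meaning and the binomials are interpreted as polynomial values. You should add this step; with it, your proof coincides with the paper's.
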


The version of this formula given in Theorem \ref{P1P1} from the introduction enumerates the columns $k=1,\dots,c(\delta)$. In the notation of the introduction, $n_k(\delta) = |C|$ and $w_k(\delta)=R_C-L_C-r|C|$ where $C$ is the $k$th column.

\begin{proof}
    Suppose first that $d_1,d_2>r(n-1)$ so that by Theorem \ref{Thm:Eulerchar} we have
    \[ \chi(X^{[n]},L_n\otimes E^r) = \#(P^{\square{}}_{n,r} \cap \Z^{2n}) =\sum_{\delta\in \{0,1,\dots,r\}^{n-1}} \#(P^\delta_{n,r} \cap \Z^{2n}). \]
    Plugging $s=0$ into the inequalities defining $P^\delta_{n,r}$ in Lemma \ref{typedeltainequalities}, we see that each $P^\delta_{n,r}$ is a product of simplices: one simplex for the horizontal positions of the blocks, and one simplex for each column of points controlling their heights.\\

    The horizontal positions of the blocks are nonnegative integers $a_{i_1},\dots,a_{i_\ell}$ such that $a_{i_{k+1}}\geq a_{i_k}+\delta_{i_k}+\cdots +\delta_{i_{k+1}-1}$ and $a_{i_\ell}+\delta_{i_\ell}+\cdots+\delta_{n-1}\leq d_1$. There are $d_1-|\delta|+\ell(\delta) \choose \ell(\delta)$ such choices of block positions. For each choice of block positions and column $C=\{p_j,\dots,p_{j'}\}$, the heights $b_j,\dots,b_{j'}$ are integers in the interval $[L_C(\delta),d_2-R_C(\delta)]$ increasing by at least $r$ in each step. There are $d_2-R_C-L_C-r|C|+r+|C|\choose |C|$ choices of such integers.\\

    This shows that for fixed $n,r>0$, the formula holds for all sufficiently large $d_1,d_2$. But both sides are polynomials in $d_1,d_2$ so the formula holds in general, completing the proof.
\end{proof}

\section{Global Sections}\label{sec:globalsections}

In this final section, we explain the sense in which the $n$-tuples we have studied correspond to global sections of line bundles on the Hilbert scheme. Let $A^r\subseteq \C[\mathbf{x,y}]$ be the spaces defined in Section \ref{sec:c2} which can be identified $A^r\simeq H^0((\C^2)^{[n]},E^r).$ As before, we equip $\C[\mathbf{x,y}]$ with the lexicographic term order with $x_1>\cdots>x_n>y_1>\cdots>y_n.$\\

Consider a smooth, projective, toric surface $X$ equipped with line bundle $L$. The global sections $H^0(X,L)$ can be identified with Laurent polynomials in $x$ and $y$ whose support is contained in $P_L$. The following result generalizes this correspondence to Hilbert schemes.

\begin{theorem}[\cite{Cavey} Proposition 4.2]
    For any smooth, projective, toric surface $X$ with line bundle $L$ and integer $r\geq0$, the global sections $H^0(X^{[n]},L_n\otimes E^r)$ can be identified with the set of polynomials in $A^r\subseteq \C[\mathbf{x,y}]$ whose support with respect to each pair of variables $(x_i,y_i)$ is contained in $P_L$.
\end{theorem}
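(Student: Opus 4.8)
The plan is to realize every global section as an honest polynomial on the dense torus chart and then characterize precisely which polynomials extend across the toric boundary. First I would fix the $T$-equivariant trivialization of $L$ at the fixed point $p_1$ whose vertex of $P_L$ sits at the origin, so that $L|_{U_1}\cong \Oo_{U_1}$ carries the trivial character and $(U_1)^{[n]}\cong (\C^2)^{[n]}$ with $(L_n\otimes E^r)|_{(U_1)^{[n]}}\cong E^r$. Since $U_1\cong\C^2$ is dense in $X$, the open subscheme $(U_1)^{[n]}$ is dense in the irreducible variety $X^{[n]}$ (Fogarty), so restriction gives an injection $H^0(X^{[n]},L_n\otimes E^r)\hookrightarrow H^0((\C^2)^{[n]},E^r)\cong A^r$. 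Every section is thereby recorded as a genuine polynomial $f\in A^r\subseteq \C[\mathbf{x,y}]$, and the task reduces to identifying the image of this injection.

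Next I would reduce extendability to a codimension-one condition. Because $X^{[n]}$ is smooth, hence normal, a section defined on the dense open $(U_1)^{[n]}$ extends to a global section if and only if it is regular along each prime divisor in the complement; higher-codimension loci impose no condition by normality. The divisorial components of $X^{[n]}\setminus (U_1)^{[n]}$ arise from configurations in which a single one of the $n$ points leaves $U_1$ across one of the $T$-invariant walls of $X$, the remaining $n-1$ points staying generic in $U_1$. Each such wall corresponds to an edge of $P_L$, so $f$ extends if and only if, for each edge of $P_L$, the restriction of $f$ remains regular when one point is pushed across the corresponding wall.

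Then I would translate each wall-crossing into an inequality on the support of $f$. Along the wall between $U_1$ and an adjacent chart $U_j$, the transition is a monomial (toric) change of coordinates applied to the single crossing pair of variables, for instance $(x_1,y_1)\mapsto(x_1^{-1},x_1^{s}y_1)$ for a fiber wall of $\mathcal H_s$ and analogously for the other edges, composed with the retrivializations of $E$ and of $L$; tracking these produces a shift by the character of $L$ at $p_j$, which is exactly the lattice functional cutting out the corresponding edge of $P_L$. Expressing $f$ in the new chart, regularity, meaning no negative exponents after substitution and twist, is equivalent to the statement that the support of $f$ in the $(x_1,y_1)$-variables lies on the inner side of that edge. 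Running over all edges gives that the $(x_1,y_1)$-support of $f$ is contained in $P_L$. Finally, since every element of $A^r$ is symmetric up to sign under the diagonal $S_n$-action permuting the blocks $(x_i,y_i)$, its Newton polytope is $S_n$-invariant, so the support condition for the pair $(x_1,y_1)$ is equivalent to the same condition for every pair $(x_i,y_i)$; conversely any $f$ meeting this condition is regular across every wall and hence extends, which gives the asserted description.

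The step I expect to be the main obstacle is the explicit wall-crossing computation in the third paragraph: one must pin down how a section written as an element of $A^r$ on $U_1$ transforms under the toric transition to an adjacent chart \emph{at the level of the Hilbert scheme}, correctly combining the monomial substitution on the crossing variables with the $E$- and $L$-retrivializations, and then verify that regularity is \emph{exactly} the edge inequality rather than merely implied by it. The reduction to codimension one and the symmetrization at the end are comparatively routine once normality of $X^{[n]}$ and the $S_n$-symmetry of $A^r$ are invoked.
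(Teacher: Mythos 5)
A point of order first: the paper you were given does not actually prove this statement---it is quoted verbatim from \cite{Cavey} (Proposition~4.2 of the author's earlier work), so there is no in-paper argument to compare yours against, and I can only assess your proposal on its own terms.

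On those terms, your route is sound and can be completed. The restriction to the dense open $(U_1)^{[n]}$ (injective because $X^{[n]}$ is integral), the Hartogs-type reduction to codimension one via normality, and the identification of the divisorial components of $X^{[n]}\setminus (U_1)^{[n]}$ as the loci $\mathcal D_{\rho}=\{Z \,:\, Z\cap D_\rho\neq \emptyset\}$, one for each toric boundary curve $D_\rho\subseteq X\setminus U_1$, are all correct. Moreover, the step you flag as the main obstacle is in fact harmless, for a reason worth making explicit: the generic point of each $\mathcal D_\rho$ is a configuration of $n$ pairwise distinct reduced points, and on the locus of such configurations $E$ is \emph{canonically} trivial (it is $\det$ of $\bigoplus_i \Oo_{z_i}$, with the symmetric group acting by sign), and under this trivialization the identification $H^0((\C^2)^{[n]},E^r)\simeq A^r$ becomes literally ``functions on ordered configurations transforming by $\sgn^r$.'' So no $E$-retrivialization enters the wall-crossing at all: after pulling back along the \'etale map from the ordered configuration space (which preserves orders of vanishing along divisors), only the monomial coordinate change in the one crossing pair of variables and the character shift coming from $L$ appear, exactly as in the $n=1$ toric computation. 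Two details you should still write out. First, the order of the rational section along $\mathcal D_\rho$ equals the minimum, over the $(x_1,y_1)$-support of $f$, of the facet functional of $P_L$ attached to $\rho$; this requires the standard no-cancellation argument (group the terms of $f$ by their $(x_1,y_1)$-exponent; distinct exponents produce distinct monomials after substitution, so the minimal order cannot cancel). Second, since the theorem allows arbitrary, possibly non-ample $L$, you should index the wall conditions by the rays of the fan---equivalently by the facet inequalities cutting out $P_L$---rather than by edges of $P_L$, which can degenerate; the two inequalities attached to the rays through $p_1$ are then automatic because elements of $A^r$ are polynomials. With these two points made precise, your argument is a complete proof.
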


Here we assume without loss of generality (choosing an appropriate $T$-action on $L$) that the corresponding polygon $P_L\subseteq \R^2$ is contained in the first quadrant. In the $\C^2$ case, corresponding to the entire collections of polynomials $A^r$, we were able to determine the exact sets of trailing term exponents (see Theorem \ref{C2}). For the support restricted collections of polynomials appearing in the projective case, only an upper bound for the sets of trailing terms was obtained in \cite{Cavey}. In the Hirzebruch surface case studied in the previous section, Proposition 4.7 in \cite{Cavey} states that the exponent $(\bfa,\bfb)$ appearing on the trailing term $x_1^{a_1}\cdots x_n^{a_n}y_1^{b_1}\cdots y_n^{b_n}$ of any polynomial corresponding to a section of $L_n\otimes E^r$ must be an $r$-separated $n$-tuple in $P_L$. We conjectured that every such $n$-tuple appears one of these trailing term exponents, and thanks to Theorem \ref{Thm:Eulerchar} we can prove this in the case $L_n\otimes E^r$ is ample.

\begin{corollary}
    Let $X$ be a Hirzebruch surface and $L=\Oo(d_1D_1+d_2D_2)$ with polygon $P_L$ and $r> 0$. If $L_n\otimes E^r$ is an ample line bundle on $X^{[n]}$, then the set of exponents of trailing terms of polynomials $f\in A^r$ corresponding to sections $H^0(X^{[n]},L_n\otimes E^r)$ is precisely the set of $r$-lexicographically increasing $n$-tuples of points in $P_L$.
\end{corollary}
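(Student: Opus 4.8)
The plan is to deduce this corollary by a dimension-counting argument that leverages the three facts already established: the support restriction of \cite{Cavey} (Proposition 4.7), the ampleness hypothesis together with Frobenius splitting, and the enumeration in Theorem \ref{Thm:Eulerchar}. Let $W \subseteq A^r$ denote the subspace of polynomials corresponding to sections of $H^0(X^{[n]}, L_n \otimes E^r)$ under the identification of the preceding theorem. The key structural observation is that for any finite-dimensional subspace $W$ of a polynomial ring equipped with a fixed monomial order, the set of trailing-term exponents of nonzero elements of $W$ has cardinality exactly $\dim_\C W$. This is the standard Gröbner/leading-term basis fact: one passes to a basis in echelon form with respect to the term order so that the trailing terms are pairwise distinct, and no two linearly independent elements can be reduced to share a trailing term.

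First I would record that $\dim_\C W = \dim_\C H^0(X^{[n]}, L_n \otimes E^r)$ by the identification theorem, and that since $L_n \otimes E^r$ is ample the Frobenius splitting of $X^{[n]}$ (cited via \cite{KT}) forces the higher cohomology to vanish, so $\dim_\C H^0(X^{[n]}, L_n \otimes E^r) = \chi(X^{[n]}, L_n \otimes E^r)$. Next I would invoke Theorem \ref{Thm:Eulerchar}, which identifies this Euler characteristic (specializing the equivariant statement $\sigma^{\square}_{n,r}(t,q) = \chi^T(X^{[n]}, L_n \otimes E^r)$ at $t=q=1$, valid in the ample range $d_1, d_2 > r(n-1)$) with the number $\#(P^{\square}_{n,r} \cap \Z^{2n})$ of $r$-lexicographically increasing $n$-tuples of points in $P_L$. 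Combining these, the number of trailing-term exponents occurring among elements of $W$ equals exactly the number of $r$-lexicographically increasing $n$-tuples in $P_L$.

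Then I would bring in the containment from \cite{Cavey}, Proposition 4.7: every trailing-term exponent of an element of $W$ is an $r$-lexicographically increasing $n$-tuple in $P_L$ (stated there as $r$-separated $n$-tuple in $P_L$). This gives an \emph{injection} from the set of realized trailing-term exponents into the set of $r$-lexicographically increasing $n$-tuples. But I have just shown these two sets have the same finite cardinality, so the injection is a bijection; hence every $r$-lexicographically increasing $n$-tuple in $P_L$ is in fact the trailing-term exponent of some $f \in W$, which is precisely the claimed equality of sets.

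The main obstacle to watch is bookkeeping about which notion of ``trailing term'' is in force and matching the terminology across the cited results: Theorem \ref{C2} and Corollary \ref{C2convexgeometry} speak of trailing terms $x_1^{a_1}\cdots x_n^{a_n} y_1^{b_1} \cdots y_n^{b_n}$ under the lexicographic order with $x_1 > \cdots > x_n > y_1 > \cdots > y_n$, and I must be certain that the containment statement and the echelon-basis argument use the identical order and the identical meaning of trailing (as opposed to leading) term, so that the cardinality-equals-dimension principle applies verbatim. A secondary subtlety is that Theorem \ref{Thm:Eulerchar} is stated with the hypothesis $d_1, d_2 > r(n-1)$, which is exactly the ampleness condition for $L_n \otimes E^r$ recorded after that theorem via \cite{BertramCoskun}; I would note explicitly that the ampleness hypothesis of this corollary supplies precisely this inequality, so no separate polynomiality or limiting argument is needed here.
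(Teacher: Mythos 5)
Your proposal is correct and follows essentially the same route as the paper's own proof: the containment from \cite{Cavey} Proposition 4.7, the identity between the number of realized trailing-term exponents and $\dim H^0(X^{[n]},L_n\otimes E^r)$, the count from Theorem \ref{Thm:Eulerchar}, and the equality $\chi = \dim H^0$ from Frobenius splitting, combined by the finite-cardinality pigeonhole step. The only difference is presentational: you spell out the echelon-basis justification and the $t=q=1$ specialization, which the paper leaves implicit.
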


\begin{proof}
    By \cite{Cavey} Proposition 4.7, the trailing term exponent $(\bfa,\bfb)$ of any such polynomial is an $r$-lexicographically increasing $n$-tuple in $P_L$. The number of trailing term exponents attained by polynomials corresponding to sections of $L_n\otimes E^r$ is equal to the dimension of $H^0(X^{[n]},L_n\otimes E^r)$, so it suffices to show that the number of such $n$-tuples coincides with $\dim H^0(X^{[n]},L_n\otimes E^r).$ By Theorem \ref{Thm:Eulerchar}, $\chi(X^{[n]},L_n\otimes E^r)$ is equal to the number of such $n$-tuples, and by the Frobenius splitting of $X^{[n]}$ \cite{KT}, we have $\chi(X^{[n]},L_n\otimes E^r)= \dim H^0(X^{[n]},L_n\otimes E^r)$ completing the proof.
\end{proof}

It would be interesting to give a more direct proof of the previous corollary by constructing the polynomials with each given trailing term. Such an approach would likely allow for more general results. For example, we expect that similar results should hold for $X=\P^2$ and/or non-ample line bundles $L_n\otimes E^r$ on $X^{[n]}$, but our methods relying on the specific combinatorics of the trapezoid $P_L$ and corresponding sets $P^{\square{}}_{n,r}$ and passing through the Euler characteristic do not directly extend to these cases.

\addcontentsline{toc}{chapter}{Bibliography}

\begingroup
\setstretch{1}
\bibliographystyle{plain}
\bibliography{refs.bib} 
\endgroup

\end{document}